\DeclareMathOperator*{\esssup}{ess\,sup}
\providecommand{\abs}[1]{\lvert#1\rvert}
\providecommand{\norm}[1]{\lVert#1\rVert}
\newtheorem{theorem}{Theorem}
\newtheorem{lemma}[theorem]{Lemma}
\newtheorem{prop}[theorem]{Proposition}
\newtheorem{cor}[theorem]{Corollary}
\theoremstyle{definition}
\theoremstyle{remark}
\newtheorem{remark}[theorem]{Remark}
\newcommand{\cA}{{\mathcal{A}}}
\newcommand{\sdd}{\,\mathrm{d}}
\newcommand{\Chi}{\raise .3ex
\hbox{\large $\chi$}}
\newcommand{\R}{\mathbb{R}}
\DeclareMathOperator{\divergence}{div}
\algrenewcommand\algorithmicrequire{\makebox[46pt][l]{\textrm{required:}}}
\algrenewcommand\algorithmicensure{\makebox[46pt][l]{\textrm{output:}}}
\algrenewcommand\algorithmicfunction{\textrm{function}}
\algrenewcommand\algorithmicwhile{\textrm{while}}
\algrenewcommand\algorithmicdo{}
\algrenewcommand\algorithmicend{\textrm{end}}
\algrenewcommand\algorithmicforall{\textrm{for all}}
\algrenewcommand\algorithmicfor{\textrm{for}}
\algrenewcommand\algorithmicrepeat{\textrm{repeat}}
\algrenewcommand\algorithmicuntil{\textrm{until}}
\algrenewcommand\algorithmicif{\textrm{if}}
\algrenewcommand\algorithmicthen{\textrm{then}}
\algrenewcommand\algorithmicelse{\textrm{else}}
\newcommand{\be}{\begin{equation}}
\newcommand{\ee}{\end{equation}}
\newcommand{\beq}{\begin{eqnarray}}
\newcommand{\beqq}{\begin{eqnarray*}}
\newcommand{\eeq}{\end{eqnarray}}
\newcommand{\eeqq}{\end{eqnarray*}}
\numberwithin{equation}{section}
\title{Identifiability of Diffusion Coefficients for Source Terms of Non-Uniform Sign}
\author{Markus Bachmayr and Van Kien Nguyen}\thanks{The authors acknowledge support by the Hausdorff Center of Mathematics, University of Bonn}
\date{\today}
\begin{document}

\maketitle

\begin{abstract}
	The problem of recovering a diffusion coefficient $a$ in a second-order elliptic partial differential equation from a corresponding solution $u$ for a given right-hand side $f$ is considered, with particular focus on the case where $f$ is allowed to take both positive and negative values.
	Identifiability of $a$ from $u$ is shown under mild smoothness requirements on $a$, $f$, and on the spatial domain $D$, assuming that either the gradient of $u$ is nonzero almost everywhere, or that $f$ as a distribution does not vanish on any open subset of $D$.
	Further results of this type under essentially minimal regularity conditions are obtained for the case of $D$ being an interval, including detailed information on the continuity properties of the mapping from $u$ to $a$.
\smallskip

\noindent \emph{Keywords.} Inverse problem, identifiability, elliptic partial differential equation
\smallskip

\noindent \emph{Mathematics Subject Classification.} {35R30, 35J25}

\end{abstract}

\section{Introduction}

The problem of identifying diffusion coefficients in second-order elliptic partial differential equations from corresponding solution values arises in many applications. For instance, these  coefficients may play the role of permeability in porous media flows, of electric conductivity in electrostatics, or of thermal conductivity in the stationary heat equation. 

We consider here the classical problem of recovering a scalar diffusion coefficient from distributed measurements of a corresponding solution. Let $D\subset \R^d$ be a bounded domain, $V:= H^1_0(D)$, and let $f \in V'$ and $\varphi \in H^1(D)$ with boundary trace $g := \varphi|_{\partial D}$ be given. For fixed parameters $\lambda,\Lambda>0$ with $\lambda < \Lambda$, we define the set of admissible diffusion coefficients 
\beqq
\mathcal{A}=\{ a\in L_\infty(D): \ 0< \lambda \leq a\leq \Lambda \}
\eeqq
and consider the elliptic problem
\beqq
-\divergence(a\nabla u)=f\qquad \text{on}\ \  D,\qquad u|_{\partial D}= g 
\eeqq
in its weak form given by
\begin{equation}\label{pde}
   \int_D a \nabla u \cdot \nabla v\sdd x = f(v), \; v\in V, \quad\text{and}\quad u - \varphi \in V. 
\end{equation}
For each $a\in \mathcal{A}$ the Lax-Milgram theorem guarantees the existence of a unique solution $u_a\in H^1(D)$ of \eqref{pde}. Here, we are interested in the question under which conditions on $f$, $\varphi$, and $a\in \mathcal{A}$,  the mapping $a\to u_a$ is injective, or in other words, under which conditions we can guarantee that $u_a$ uniquely determines $a$. This property is also referred to as \emph{identifiability} of $a$.

Some restrictions, especially on the source term $f$, are clearly necessary: as soon as $\nabla u_a$ vanishes on an open subset of $D$, $a$ cannot be recovered on that subset via \eqref{pde}. In many existing contributions, this issue is addressed by directly imposing additional assumptions on $\nabla u_a$, or by requiring that $f > 0$. With the latter condition, one can also obtain stronger statements than identifiability, such as stability in the sense of H\"older continuity of $a$ with respect to $u_a$. However, in many situations of interest, the given $f$ may contain both sources and sinks, and thus assuming $f$ to be positive in all of $D$ may be too restrictive.

A second type of restrictions concerns the regularity of the problem data required for showing identifiability.
Except for $d=1$, the existing identifiability results require some additional smoothness of the coefficients $a$ beyond the basic requirement $a \in L_\infty(D)$, even when the restriction $f>0$ is imposed.

We thus aim to understand the identifiability of $a$ in particular without assuming $f$ to have uniform sign, and without further explicit assumptions on $u_a$. 
At the same time, we aim to minimize the additional assumptions on $a$ required for ensuring identifiability. 

\subsection{Main Results and Relation to Previous Work}
The identification of diffusion coefficients from various types of measurements of solutions is a well-studied problem. Besides the case of distributed measurements of $u_a$ that is in our focus here, variants of the celebrated Calder\'on problem of recovering $a$ from the Dirichlet-to-Neumann map have also received significant attention. In the latter setting, one can work with a large set of different boundary data $g$, as typical in applications in electrical impedance tomography.
For the case of identification of $a$ from knowledge of $u_a$ in $D$, the interest is rather in characterizing identifiability given only one set of data $f$, $g$ (as one would expect, for instance, in applications in geophysics).
Provided identifiability holds, one may also consider the continuity properties of the mapping $u_a\mapsto a$.

The most comprehensive treatments have been obtained for $d=1$, where explicit solution formulas are available, see, e.g., \cite{Marcellini,KunischWhite,Boet}. Here \cite{Marcellini,Boet} make the assumption $f > 0$ and work with homogeneous boundary conditions, and \cite{Marcellini,KunischWhite} require the additional regularity $a \in H^1(D)$.

In the higher-dimensional case, one approach that has been used to obtain identifiability is based on the observation that \eqref{pde} defines a transport problem for $a$: for sufficiently regular data, one has
\begin{equation}\label{hyperbolic}
 b \cdot \nabla a + c a = - f,\qquad b :=  \nabla u_a, \; c:=  \Delta u_a .
\end{equation}
This connection is used explicitly in \cite{Richter,RichterNumerical} as well as in \cite{CG}, in both cases assuming $a \in C^1(\bar D)$ and $u \in C^2(\bar D)$, so that \eqref{hyperbolic} can be understood in the classical sense. 
In other works, such as \cite{KohnLowe,Falk,ItoKunisch,Boet}, variational techniques have been used, in each case requiring additional regularity of $a$ and of the further problem data. The main results in \cite{ItoKunisch} and \cite{Boet} are based on estimating $\int_D (a-b)^2/a^2 ( a \abs{\nabla u_a}^2 + u_a f) \sdd x $ in terms of $u_a - u_b$ for coefficients $a,b$, and these arguments rely strongly on $f$ and $u_a, u_b$ having uniform sign. In \cite{Boet}, one has only the relatively weak regularity requirement on the coefficients that $a,b \in H^s(D)$, $s>1/2$.

In our main result Theorem \ref{bvidentifiability}, shown in Section \ref{sec:main-ident}, we require instead $a,b \in BV(D) \cap C^{0,\alpha}(\bar D)$ for some $\alpha >0$, with $D$ a $C^{1,\alpha}$ domain and $f$ allowed to be a distribution with some additional regularity. Moreover, we need that either the boundary data $g$ are constant, or $(a-b)|_{\partial D} = 0$; this type of condition has appeared before, e.g., in \cite{Alessandrini2,Richter,CG}, corresponding to the observation that no Cauchy data are required in the problem \eqref{hyperbolic} when $g$ is constant.
We then obtain identifiability if one of the following two conditions is satisfied: 
\begin{enumerate}[(A)]
	\item  $\nabla u_a$ does not vanish on any set of positive measure; or 
	\item $f$ does not vanish as a distribution on any open subset of $D$, without any further restriction on the sign of $f$.
\end{enumerate}
Note that the condition (A) on $\nabla u_a$ is essentially minimal in view of our above considerations, but depends on the solution $u_a$.
Condition (B), which depends only on the data $f$ is only a sufficient criterion: if $f$ vanishes on some open set, one may or may not have identifiability of $a$. In the particular case that $f$ is a function, (B) holds when $f\neq 0$ a.e.

Although our new technique of proof for this result does not use the interpretation as a hyperbolic problem \eqref{hyperbolic} explicitly, this connection still plays a role, since we extensively rely on recent tools of geometric measure theory, especially from \cite{CTZ}, that were developed for conservation laws with non-smooth data.

We thus obtain identifiability under substantially more general conditions than in \cite{Richter,RichterNumerical,Boet}, where strictly positive $f$ is assumed.
Some further existing results cover rather special cases, for instance 
$f = \delta_{x_0}$ for some $x_0 \in D$ \cite{Alessandrini}; $f=0$ with certain conditions on $g$ \cite{Alessandrini2};
or sources and sinks modelled by inflow and outflow conditions on interior boundaries in $D$ \cite{ChaventKunisch}.
We are not aware of a previous result requiring only a condition on $f$ as in (B) when $f$ is not required to have uniform sign. A result very similar to condition (A) is shown in \cite{CG}, using \eqref{hyperbolic}: there it is shown that if $a \in C^1(\bar D)$, $u \in C^2(\bar D)$, $u|_{\partial D}$ is constant, and the set where $\nabla u$ vanishes has empty interior, then $a$ is identifiable. Earlier similar results were also obtained in \cite{Kunisch} 
and \cite{Falk} under stronger assumptions.

Concerning continuity properties of the mapping $u_a \mapsto a$, most existing results rely on uniform positivity of $f$. Such estimates are of the form
\begin{equation}\label{stab-general}
     \norm{ a - b }_{L_p(D)} \leq C \norm{ u_a - u_b }_V^\gamma 
\end{equation}
for some $C>0$, $\gamma \in (0,1]$, and $1\leq p < \infty$.
Some partial results allowing for more general $f$ have been obtained using assumptions on the Helmholtz decomposition of $\nabla u$ and with finite-dimensional sets of coefficients $a$ in \cite{ChaventKunisch}. The techniques that we develop in Section \ref{sec:main} for showing identifiability do not lend themselves to proving such stability estimates, as explained further in Section \ref{sec:main-stability}. 

In Section \ref{sec:1d}, we demonstrate that for $f$ having non-uniform sign, the exponents $\gamma$ in such stability estimates in general depend strongly on the particular $f$ under consideration.
To this end, we turn to a study of such stability properties in the case $d=1$. In this simpler setting, we obtain results that provide a detailed characterization of H\"older continuity properties \eqref{stab-general}, illustrating the substantial complications that arise when dropping the positivity requirement on $f$. In this case, we also obtain an identifiability result analogous to Theorem \ref{bvidentifiability} that only assumes $a,b\in \cA$ without further regularity. Here we require $f\in L_1(D)$ with $f\neq 0$ almost everywhere in $D$.

\subsection{Piecewise Constant Coefficients}

To illustrate our assumptions on $f$, we consider as a first example a simplified problem with diffusion coefficients varying in a finite-dimensional set of piecewise constant functions.
Let $\{ D_j \colon j=1,\ldots, n\}$ be a partition of $D$ where $D_j, \ j=1,\ldots,n$ are Lipschitz domains  and let
\begin{equation}\label{pwconst}
\cA_n := \biggl\{    \sum_{j=1}^n a_j \Chi_{D_j} \colon  \lambda \leq a_j \leq \Lambda \biggr\}.
\end{equation}
We obtain the following result, inspired by \cite[Theorem~5.2]{Boet} which uses similar assumptions with $f>0$. 
\begin{theorem}\label{pw}
	Let $a,b \in \cA_n$ and $f\in H^{-1}(D)$. Then we have
	\be \label{k-04}
	\abs{ a_i - b_i } \norm{f}_{H^{-1}(D_i)} 
	\leq \Lambda^2 \norm{ \nabla (u_a - u_b )}_{L^2(D_i)}\,.
	\ee
	In particular, if  $\min\{\|f\|_{H^{-1}(D_i)} \colon i=1,\ldots,n\}>0$ then $u_a=u_b$ implies $a=b$\,.
\end{theorem}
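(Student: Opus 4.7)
The plan is to localize the weak formulation \eqref{pde} to each piece $D_i$ by testing against functions supported there, and exploit the fact that $a$ and $b$ are constant on $D_i$ to pull the coefficients out of the integral.

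More concretely, I would first fix an arbitrary $v\in H^1_0(D_i)$ and extend it by zero to an element $\tilde v\in V = H^1_0(D)$. Since $a=a_i$ on $D_i$ and $\tilde v$ vanishes elsewhere, the weak equation \eqref{pde} for $u_a$ reduces to
\[
    a_i \int_{D_i} \nabla u_a \cdot \nabla v \sdd x = f(\tilde v).
\]
Exactly the same identity holds with $a_i$ and $u_a$ replaced by $b_i$ and $u_b$. Subtracting these two identities (so that $f$ cancels) and regrouping gives
\[
    (a_i - b_i) \int_{D_i} \nabla u_a \cdot \nabla v \sdd x
    = - b_i \int_{D_i} \nabla(u_a - u_b)\cdot \nabla v \sdd x,
\]
and then substituting back $\int_{D_i}\nabla u_a\cdot\nabla v\sdd x = f(\tilde v)/a_i$ yields the clean identity
\[
    (a_i - b_i)\, f(\tilde v) = -\, a_i b_i \int_{D_i} \nabla(u_a - u_b)\cdot \nabla v \sdd x.
\]

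From here it is just Cauchy--Schwarz and the upper bound $a_i b_i \leq \Lambda^2$: the right-hand side is dominated by $\Lambda^2 \norm{\nabla(u_a-u_b)}_{L^2(D_i)} \norm{\nabla v}_{L^2(D_i)}$. Using the standard identification of $H^{-1}(D_i)$ as the dual of $H^1_0(D_i)$ under the gradient norm (with $f|_{D_i}(v) := f(\tilde v)$), taking the supremum over $v\in H^1_0(D_i)$ with $\norm{\nabla v}_{L^2(D_i)} \leq 1$ gives \eqref{k-04}. The final claim is immediate: $u_a = u_b$ forces the right-hand side of \eqref{k-04} to vanish for every $i$, and under $\min_i \norm{f}_{H^{-1}(D_i)} > 0$ this forces $a_i = b_i$ for each $i$, hence $a=b$.

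There is no real obstacle in this argument; the only subtle point worth checking carefully is that the zero-extension $v\mapsto \tilde v$ is indeed an isometry of $H^1_0(D_i)$ into $V$ (standard for arbitrary open $D_i$), which legitimises the duality identification $\norm{f}_{H^{-1}(D_i)} = \sup_v f(\tilde v)/\norm{\nabla v}_{L^2(D_i)}$ used at the end. No regularity of $\partial D_i$ beyond being an open set is needed for this part of the proof.
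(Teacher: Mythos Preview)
Your proof is correct and follows essentially the same route as the paper: localize the weak form to $D_i$, use that the coefficients are constant there, and arrive at the identity $(a_i-b_i)\,f(\tilde v) = \pm\, a_i b_i \int_{D_i} \nabla(u_a-u_b)\cdot\nabla v\sdd x$, then apply Cauchy--Schwarz and take the supremum over $v$. The only cosmetic difference is that the paper obtains this identity in one subtraction by testing the two equations against $b v$ and $a v$ respectively, whereas you test both against $v$ and then regroup and back-substitute; the content is identical.
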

 \begin{proof} 
From the variational form, for all $v\in H_0^1(D_i)$ we have
\beqq
\int_{D_i} a\nabla u_a \cdot \nabla (b v) \sdd x= f(bv)
\qquad \text{and}\qquad
\int_{D_i} b\nabla u_b \cdot \nabla (a v) \sdd x= f(av)\,.
\eeqq 
This leads to 
\begin{equation*}
(a_i-b_i)\, f(v) = a_ib_i \int_{D_i}\nabla (u_a-u_b) \cdot \nabla v \sdd x
 \leq \Lambda^2  \norm{ \nabla (u_a - u_b )}_{L^2(D_i)} \| \nabla v\|_{L_2(D_i)}\,.
\end{equation*}
Since this holds for all $v\in H_0^1(D_i)$, we obtain \eqref{k-04}.
\end{proof}

In the case of diffusion coefficients that are piecewise constant on a fixed partition of $D$, we thus have identifiability as soon as $f$ does not vanish in the sense of distributions (that is, as an element of $H^{-1}$) on any of the subdomains. We next come to our main result, where we obtain an analogous condition on $f$.

\section{Coefficients in $BV\cap C^{0,\alpha}$}\label{sec:main}

 In this section we consider the equation with coefficients $a$ in $BV(D)\cap C^{0,\alpha}(\bar{D})$. Recall that for an open subset $\Omega$ of $\R^d$, a non-negative integer $k$, and $0<\alpha\leq 1$, the \emph{H\"older space} $C^{k,\alpha}(\bar{\Omega})$ is defined as the collection of all $u\in C^k(\bar{\Omega})$ such that
 \beqq
 \|u\|_{C^{k,\alpha}(\bar{\Omega})} : = \| u\|_{C^k(\bar{\Omega})} + \sup_{|\beta|=k}\sup_{x,y\in \Omega, x\not =y} \frac{D^\beta u(x)-D^\beta u(y)}{|x-y|^\alpha}<\infty\,.
 \eeqq
  A function $f\in L_1(\Omega)$ has \emph{bounded variation} in $\Omega$, and we write $f\in BV(\Omega)$, if
 \beqq
\|Df\|(\Omega):= \sup\bigg\{ \int_\Omega f\divergence \varphi \sdd x \ \colon\ \varphi \in C_0^1(\Omega,\R^d),\ \|\varphi\|_{L_\infty(\Omega)}\leq 1 \bigg\}<\infty\,.
 \eeqq

\subsection{Identifiability} \label{sec:main-ident}

Our main result, for \eqref{pde} with $d>1$, is the following.
 
 \begin{theorem}\label{bvidentifiability} Let $0<\alpha\leq 1$, and let the following assumptions hold:
 \begin{enumerate}[{\rm(i)}]
 	\item $D$ is a $C^{1,\alpha} $ domain,
 	\item $a,b\in \cA \cap BV(D) \cap C^{0,\alpha}(\bar{D})$,
 	\item we have either $\varphi \in c + V$ for some $c \in \R$ (so that $u_a, u_b \in c+ V$), or $a|_{\partial D} = b|_{\partial D}$,
 	\item $f = f_0 + \divergence F$ where $f_0 \in L_\infty(D)$, $F \in C^{0,\alpha}(\bar D,\R^d)$.
\end{enumerate}
Assume that one of the following two conditions holds:
\begin{enumerate}[{\rm(A)}]
\item $\nabla u_a \neq 0$ a.e.\ in $D$; or
 	\item $f$ does not vanish in the sense of distributions on any open subset of $D$.
 \end{enumerate}
 Then $u_a=u_b$ implies $a=b$.   
 \end{theorem}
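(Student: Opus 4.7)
\emph{Sketch of approach.} Setting $u := u_a = u_b$ and $w := a - b$, subtracting the weak formulations \eqref{pde} for the two coefficients immediately yields $\int_D w \, \nabla u \cdot \nabla v \sdd x = 0$ for all $v \in V$, that is, $\divergence(w\nabla u) = 0$ in the sense of distributions on $D$. The plan is to extract $w \equiv 0$ from this single identity, combining hypothesis (iii) as a boundary condition with either (A) or (B) as a nondegeneracy condition. As a preliminary step I would invoke Schauder theory for divergence-form equations --- applicable by (i), (ii) and (iv) --- to upgrade the solution to $u \in C^{1,\alpha}(\bar D)$, so that $\nabla u \in C^{0,\alpha}(\bar D, \R^d)$ and $w \nabla u$ becomes a bounded, divergence-free $BV$ vector field.

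The heart of the argument will be a Gauss--Green computation against level sets of $u$, using the divergence-measure-field framework from \cite{CTZ}: for a.e.\ $t$, the sublevel set $\{u < t\}$ is of finite perimeter with reduced boundary essentially equal to the level set $\{u = t\}$, and the normal trace of $w\nabla u$ along it is well-defined despite $w$ being merely $BV$. The zero-divergence identity then gives
\beqq
0 = \int_{\{u < t\}} \divergence(w \nabla u) = \int_{\{u = t\}} w \, |\nabla u| \sdd \mathcal{H}^{d-1}
\eeqq
provided the flux of $w\nabla u$ on $\partial D$ vanishes, which is precisely what (iii) arranges: either $w|_{\partial D} = 0$ directly, or $u$ is constant on $\partial D$ so that level sets for non-boundary values sit compactly in $D$. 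The next step I envisage is a finer slicing/blow-up analysis from \cite{CTZ}, applied to the $BV$ function $w$ on a.e.\ level set and combined with localizing test functions transverse to level sets (so that the above identity holds not only on the full sublevel set but on arbitrary ``flow boxes'' within $\{u<t\}$), upgrading the integral identity to a pointwise vanishing of $w$ on each level set of $u$ carrying positive $|\nabla u|$-mass.

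To conclude: under (A), $\nabla u \neq 0$ a.e., so $|\nabla u|$ is positive on a.e.\ level set and the previous step forces $w = 0$ a.e., hence $w \equiv 0$ by continuity of $w \in C^{0,\alpha}(\bar D)$. Under (B), if $\nabla u \equiv 0$ on some open $U \subset D$ then $\divergence(a\nabla u) = 0$ on $U$, contradicting the distributional nonvanishing of $f$ there; since $\nabla u$ is continuous, $\{\nabla u = 0\}$ must be closed with empty interior, and running the argument of case (A) on the open dense set $\{\nabla u \neq 0\}$ together with continuity of $w$ again yields $w \equiv 0$ on $D$.

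The hard part will be making the level-set slicing genuinely rigorous at this regularity: $u$ is only $C^{1,\alpha}$, $\Delta u$ is a mere distribution, and $w$ is only $BV \cap C^{0,\alpha}$. Controlling the normal trace of $w \nabla u$ through possibly rough portions of $\{u = t\}$ and converting the integral identity above into the required pointwise statement on $w$ is exactly where the geometric measure theory machinery of \cite{CTZ} would be indispensable; case (B) additionally demands the continuity/density bridge noted above to cover the possibility that $\{\nabla u = 0\}$ has positive Lebesgue measure even when it has empty interior.
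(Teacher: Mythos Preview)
Your approach has a genuine gap at the step you yourself flag as ``the hard part''. Slicing by level sets of $u$ and applying Gauss--Green on $\{u<t\}$ yields only the scalar identity $\int_{\{u=t\}} w\,|\nabla u|\sdd\mathcal{H}^{d-1}=0$ for a.e.\ $t$; this is a single constraint per level set and cannot be upgraded to $w=0$ pointwise without further input. What you call ``localizing to flow boxes'' amounts to running characteristics for the transport problem $\nabla u\cdot\nabla w + (\Delta u)\,w=0$, but here $\nabla u\in C^{0,\alpha}$ only and $\Delta u$ is not even a function; there is no uniqueness theory for transport along merely H\"older vector fields, which is exactly why the earlier works \cite{Richter,CG} cited in the introduction required $u\in C^2$. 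The results of \cite{CTZ} supply normal traces for divergence-measure fields on sets of finite perimeter, but nothing that converts an integrated level-set identity into a pointwise one. Concretely, if you try to localize by testing against $v=\eta\,\psi(u)$ with a spatial cutoff $\eta$, the cross term $\int_D w\,\psi(u)\,\nabla u\cdot\nabla\eta\sdd x$ does not vanish and is not controlled.

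The paper avoids this issue by slicing along superlevel sets of $h:=a-b$ rather than of $u$. One applies the Gauss--Green formula of \cite{CTZ} to the field $h\,u_a\,\nabla u_a$ on $A_t=D\cap\{h>t\}$; since $\divergence(h\nabla u_a)=0$, the product rule gives $\int_{A_t} h\,|\nabla u_a|^2\sdd x$ equal to the boundary flux. The interior portion of that flux lives on $\partial^*\{h>t\}\cap D$, where $h=t$ by continuity of $h$, so it is bounded by a constant times $t\cdot P(\{h>t\})$. The coarea formula applied to the $BV$ function $h$ then produces a sequence $t_n\downarrow 0$ with $t_n\,P(\{h>t_n\})\to 0$, while the $\partial D$ contribution is killed by (iii). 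Passing to the limit yields directly $\int_{\{h>0\}} h\,|\nabla u_a|^2\sdd x=0$ (and analogously for $\{h<0\}$), from which (A) concludes immediately and (B) concludes via Lemma~\ref{subsetzerolemma}, using that $\{h>0\}$ is open. The decisive idea you are missing is to slice by the unknown $h$ rather than by the solution $u$: on level sets of $h$ the integrand is automatically small, so no pointwise extraction is ever needed.
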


Note that the condition (B) in Theorem \ref{bvidentifiability} on $f$ is similar to the requirement for identifiability on $f$ in the piecewise constant case of Theorem \ref{pw}, where we only need that $f$ does not vanish on any subdomain in the partition. In the case that $f$ can be represented as a function, condition (B) in Theorem \ref{bvidentifiability} reduces to $f \in L_\infty(D)$ and $\esssup_{U} \abs{f} > 0$ for any open $U \subset D$, which is implied by $f\neq 0$ a.e.
Before turning to the proof, we give some further remarks on our assumptions. 
 
 \begin{remark} Theorem \ref{bvidentifiability} applies in particular to Lipschitz coefficients $a,b$, but our assumption (ii) is strictly weaker, since $C^{0,1}(\bar D) \subset BV({D})\cap C^{0,\alpha}(\bar{D})$ as a proper subset when $\alpha < 1$.  We also have $W^{1,p}({D})\subset BV({D})\cap C^{0,\alpha}(\bar{D})$ with $d<p$ and $\alpha$ small enough.  \end{remark}
 
\begin{remark}\label{re-k01} 
Under the stated conditions, identifiability generally does not hold if instead of (ii) we only require $a,b\in\mathcal{A}$ without any further continuity assumptions. This is the case even when $f\in L_\infty(D)$, as can be seen from the following example: Denote by $S$ the Smith--Volterra--Cantor set, which is a subset of $[0,1]$ that is nowhere dense, but satisfies $|S|=\frac 12$.
Using $S$, one can easily construct a modification $w$ of Volterra's function \cite{Volterra} that has in particular the following properties: $w = 0$ on $S$; $w'$ exists on $[0,1]$, $w'=0$ on $S$, and $w'$ does not vanish on any interval in $[0,1]$; and $W(x):= \int_0^x w(t) \sdd t$ satisfies $W(1)=0$.
 Setting 
\beqq
f(x)=-w'(x), \qquad x\in [0,1],
\eeqq	
we thus have $\|f\|_{L_\infty(U)}>0$ in any interval $U$. Since $w = w'=0$ on $S$, if $a=b=1$ on $[0,1]\backslash S$ and $a\not=b$ are chosen arbitrarily on $S$, then $a$ and $b$ produce the same solution $u_a=u_b=W$. Thus the statement of Theorem \ref{bvidentifiability} does not hold if $a,b$ are permitted to be discontinuous.
\end{remark}
 
\begin{remark}
With nontrivial inhomogeneous boundary conditions, some further restrictions (for instance that the diffusion coefficients agree on $\partial D$, as in assumption (iii) of Theorem \ref{bvidentifiability}) are unavoidable to ensure identifiability. This is illustrated by the following example: define $u(x) = -\frac12 (x+\frac12)^2$, $x \in [0,1]$, so that $u(0),u(1) <0$ with $u(0)\neq u(1)$. Then one easily checks that for
\[
	a(x) := 1 + \frac1{x+\frac12}, \quad b(x) := 1,
\]
we have $-(a u')' = -(b u')' = 1$ in $(0,1)$, and thus identifiability does not hold in this case.
In the case $d=2$, conditions on boundary data $g$ that guarantee identifiability when $f=0$ have been obtained in \cite{Alessandrini2}.
\end{remark}
 
Next, we collect several notions and auxiliary results that will be used in the proof of Theorem \ref{bvidentifiability}. 
The conditions in Theorem \ref{bvidentifiability} imply the following regularity properties of $u_a$, as shown in \cite[Theorems 8.33 and 8.34]{GT}. 

 \begin{theorem}\label{re}
 	Let $0<\alpha\leq 1$ and let $D$ be a $C^{1,\alpha} $ domain. Assume   $a\in C^{0,\alpha}(\bar{D}) $,
 	$\varphi \in C^{1,\alpha}(\bar{D})$, 
 	and $f = f_0 + \divergence F$ where $f_0 \in L_\infty(D)$, $F \in C^{0,\alpha}(\bar D,\R^d)$. Then the solution $u_a$ of \eqref{pde} belongs to $C^{1,\alpha}(\bar{D})$ and one has
 	\beqq
 	\|u_a\|_{C^{1,\alpha}(\bar{D})} \leq C\bigl(\norm{\varphi}_{C^{1,\alpha}(\bar{D})} + \norm{ f_0}_{L_\infty(D)}  + \norm{F}_{C^{0,\alpha}(\bar{D})}  \bigr) \,.
 	\eeqq
 	The constant $C$ depends on $d$, $\lambda$, $\Lambda$, $\| a\|_{C^{0,\alpha}(\bar{D}) }$, and $D$\,.
 \end{theorem}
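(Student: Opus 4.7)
The plan is to reduce the statement to classical Schauder-type boundary regularity for second-order elliptic equations in divergence form, which is essentially the content of Theorems 8.33 and 8.34 in \cite{GT}. The key observation is that the weak formulation \eqref{pde} with $f = f_0 + \divergence F$ can be rewritten as
\[
-\divergence(a \nabla u_a - F) = f_0 \quad \text{in } D, \qquad u_a - \varphi \in V,
\]
a divergence-form equation with principal coefficient $a \in C^{0,\alpha}(\bar{D})$, a first-order source vector $F \in C^{0,\alpha}(\bar{D},\R^d)$, and an interior inhomogeneity $f_0 \in L_\infty(D)$. This fits precisely into the hypotheses of the Schauder theory for divergence-form operators, with boundary data $\varphi \in C^{1,\alpha}(\bar{D})$ and a $C^{1,\alpha}$ domain $D$ providing exactly what is needed for a global $C^{1,\alpha}(\bar{D})$ bound.

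First I would set up the standard three-step Schauder bootstrap. Step one is a global $L_\infty(D)$ estimate for $u_a$ via De Giorgi--Moser iteration applied to $u_a - \varphi \in V$, yielding a bound on $\norm{u_a}_{L_\infty(D)}$ in terms of $\norm{\varphi}_{L_\infty(\partial D)}$, $\norm{f_0}_{L_\infty(D)}$, and $\norm{F}_{L_\infty(D)}$. Step two upgrades this to $u_a \in C^{0,\beta}(\bar{D})$ for some $\beta \in (0,\alpha]$ depending only on $d,\lambda,\Lambda$ via the Moser--Nash estimate, the boundary H\"older modulus being inherited from $\varphi$ through the $C^{1,\alpha}$-regularity of $\partial D$.

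Step three is the core of the argument: freezing coefficients. Around an interior point $x_0 \in D$ one rewrites the equation as
\[
-a(x_0)\Delta u_a = \divergence\bigl((a-a(x_0))\nabla u_a\bigr) + \divergence F + f_0,
\]
and compares $u_a$ on small balls with the solution of the constant-coefficient Dirichlet problem sharing its boundary values. A Campanato-type iteration exploiting the $C^{0,\alpha}$ moduli of $a$ and $F$, together with the $L_\infty$ bound on $f_0$ and the $C^{0,\beta}$ bound from step two, then yields interior $C^{1,\alpha}$ estimates for $\nabla u_a$. For regularity up to the boundary one flattens $\partial D$ locally through a $C^{1,\alpha}$ chart, subtracts a $C^{1,\alpha}$ extension of $\varphi$ to reduce to homogeneous Dirichlet data on a half-ball, and runs the same frozen-coefficient Campanato scheme against half-space solutions vanishing on the flat part of the boundary. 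A finite covering of $\bar{D}$ then combines the interior and boundary estimates, yielding the claimed bound with $C$ depending on $d$, $\lambda$, $\Lambda$, $\norm{a}_{C^{0,\alpha}(\bar{D})}$, and $D$ (the latter dependence absorbing the $C^{1,\alpha}$ norms of the boundary charts).

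The main obstacle I expect lies in the boundary-flattening step: the pullback under a $C^{1,\alpha}$ chart mixes first derivatives of the chart with $\nabla u_a$, so one must verify carefully that the transformed principal coefficient and source vector remain in $C^{0,\alpha}$ with norms controlled by the stated quantities, and that the extension of $\varphi$ one subtracts does not disturb these estimates. Since Theorems 8.33 and 8.34 of \cite{GT} codify exactly this machinery in the required generality, in practice I would verify that our rewriting matches the form treated there and quote the estimate directly, rather than redevelop the full Schauder iteration by hand.
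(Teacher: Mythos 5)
Your proposal is correct and matches the paper's approach: the paper gives no independent proof but simply cites \cite[Theorems 8.33 and 8.34]{GT}, which is exactly the reduction you arrive at after rewriting the equation as $-\divergence(a\nabla u_a - F) = f_0$. Your sketch of the underlying Schauder/Campanato machinery is a faithful outline of what those theorems encapsulate, but the paper itself stops at the citation.
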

 
For establishing the connection to the conditions on $f$ in Theorem \ref{bvidentifiability}, the following simple auxiliary result will be instrumental.
 
 \begin{lemma}\label{subsetzerolemma}
Let $h$ be a bounded and continuous function on $D$ and let $A = \{ h>0\}  \subset D$ have positive measure. Then
 	\begin{equation}\label{subsetzeroint}
 	\int_{A} h |\nabla u_a|^2 \sdd x=0
 	\end{equation}
 	implies that $f$ vanishes in the sense of distributions on an open subset of $A$.
 \end{lemma}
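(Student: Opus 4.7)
The plan is to exploit the continuity of $h$ to see that $A$ itself is an open subset of $D$, and then to deduce from the vanishing integral that $\nabla u_a = 0$ a.e.\ on $A$; from there, testing the weak form \eqref{pde} against functions compactly supported in $A$ immediately yields that $f$ vanishes as a distribution on $A$.

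More concretely, my first step would be to observe that since $h$ is continuous on $D$, the set $A = \{h > 0\}$ is an open subset of $D$. By hypothesis $A$ has positive measure, so $A$ is in particular a non-empty open subset of $D$. Next, since $h > 0$ on all of $A$ and $h |\nabla u_a|^2 \geq 0$ on $A$, the condition \eqref{subsetzeroint} forces $|\nabla u_a|^2 = 0$ almost everywhere on $A$, i.e.\ $\nabla u_a = 0$ a.e.\ on $A$, and consequently $a \nabla u_a = 0$ a.e.\ on $A$.

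Finally, for any $v \in C_c^\infty(A)$, extended by zero to all of $D$, we have $v \in V = H_0^1(D)$, so that the weak form \eqref{pde} gives
\[
 f(v) \;=\; \int_D a \nabla u_a \cdot \nabla v \sdd x \;=\; \int_A a \nabla u_a \cdot \nabla v \sdd x \;=\; 0,
\]
since the integrand vanishes a.e.\ on $A$. Hence $f$ vanishes as a distribution on the open set $A$, which is itself contained in $A$, proving the claim.

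There is no real obstacle in this argument; the only subtlety to flag is that the open set on which $f$ vanishes can be taken to be $A$ itself, thanks to the continuity assumption on $h$ (without which $\{h>0\}$ need not contain any open set, and the reduction to the weak formulation would fail).
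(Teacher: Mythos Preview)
Your argument is correct and follows essentially the same route as the paper's proof: use continuity of $h$ to get openness, deduce $\nabla u_a=0$ a.e.\ on the set from the vanishing nonnegative integral, and then test the weak formulation with compactly supported smooth functions. The only cosmetic difference is that the paper localizes to a single ball $B(x_0,\delta)\subset A$, whereas you (equivalently but slightly more directly) conclude that $f$ vanishes on the whole open set $A$.
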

 
\begin{proof}
 	The condition \eqref{subsetzeroint} implies $h \nabla u_a =0$ a.e.\ in $A$. Let $x_0\in A$. Since $h$ is continuous, there exist a ball  $B(x_0,\delta)\subset 
 	A$ centered at $x_0$ with radius $\delta$ such that $h>0$ in $B(x_0,\delta)$, and thus $\nabla u_a(x) = 0$ a.e.\ in $B(x_0,\delta)$. Hence from 
\beqq
f(v) = \int_{B(x_0,\delta)} a \nabla u_a \cdot \nabla v \sdd x =0  
\eeqq
 for all $v\in C_0^{\infty}(B(x_0,\delta))$ we conclude that 	
 	$f=0$ in $B(x_0,\delta)$. 
\end{proof}
 
 The further argument is based on the following observation: with $h = a - b$, for sufficiently regular problem data and any subdomain $A\subset D$,
\begin{equation}\label{formal}
   \int_A \divergence(h u_a \nabla u_a)\sdd x = \int_{\partial A} h u_a (\nabla u_a \cdot\nu) \sdd \mathcal{H}^{d-1},
\end{equation}
and since $\divergence (h \nabla u_a) = 0$ a.e.,
\[
     \int_A \divergence(h u_a \nabla u_a)\sdd x = \int_A h \abs{\nabla u_a}^2 \sdd x.
\]
Now if we could choose $A = \{ h > 0 \}$, then by our assumptions on the boundary data and since $h = 0$ on $\partial A \setminus \partial D$, the right hand side in \eqref{formal} would vanish and we would immediately arrive at
$\int_{\{ h > 0 \}} h \abs{\nabla u_a}^2 \sdd x = \int_{\{ h < 0 \}} h \abs{\nabla u_a}^2 \sdd x= 0$,
and hence at the conclusion of Theorem \ref{bvidentifiability}. However, with the present regularity of $h$ and $u_a$, these steps cannot be carried out directly. In particular, the set $\{ h>0\}$ need not be of finite perimeter, which would be a minimum requirement for justifying a Gauss-Green identity as in \eqref{formal}. For carrying out the strategy outlined above, we thus require results that cover the present low-regularity setting, and in particular we introduce an additional approximation by certain sets $\{ h>t\}$ of finite perimeter with $t \downarrow 0$.

 For a domain $\Omega\subset \R^d$ and a vector field $F\in L_\infty(\Omega,\R^d)$, we set
 \beqq
 \|\divergence F\|(\Omega) =\sup \bigg\{\int_\Omega F\cdot \nabla \phi \sdd x\ \colon \ \phi \in C_0^1(\Omega),\ \|\phi\|_{L_\infty(\Omega)} \leq 1 \bigg\}\,.
 \eeqq
We say that $F$ is a divergence-measure field over $\Omega$ if
\beqq
F\in L_\infty(\Omega,\R^d) \qquad\text{and}\qquad \|\divergence F\|(\Omega)<\infty\,.
\eeqq
 We require the following product rule proved in \cite{CF}.
 \begin{lemma}\label{prod}
 	Let $g$ be a Lipschitz continuous function over any compact set in $\R^d$ and let $F$ be a divergence-measure field. Then $gF$ is a divergence-measure field and
 	\beqq
 	\divergence(gF) = g\divergence F + F\cdot \nabla g.
 	\eeqq
 \end{lemma}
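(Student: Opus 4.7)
The plan is to establish the product rule by mollifying the Lipschitz factor $g$, applying the classical product rule to the smooth approximations, and then passing to the distributional limit.

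First, I would fix an open subset $\Omega'\Subset \Omega$ on a neighborhood of whose closure $g$ is Lipschitz, and set $g_\epsilon := g \ast \rho_\epsilon$ for a standard mollifier $\rho_\epsilon$. For sufficiently small $\epsilon$, $g_\epsilon \in C^\infty(\overline{\Omega'})$; moreover $g_\epsilon \to g$ uniformly on $\overline{\Omega'}$ by continuity of $g$, and by Rademacher's theorem together with standard mollifier estimates the gradients $\nabla g_\epsilon$ are uniformly bounded in $L^\infty(\Omega')$ by the Lipschitz constant of $g$ and converge to $\nabla g$ almost everywhere. Dominated convergence then gives $\nabla g_\epsilon \to \nabla g$ in $L^p(\Omega')$ for every $p<\infty$.

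For any test function $\phi\in C_0^1(\Omega')$ I would expand $g_\epsilon \nabla \phi = \nabla(g_\epsilon \phi) - \phi\,\nabla g_\epsilon$ and pair with $F$ to obtain
\beqq
-\int_{\Omega'}(g_\epsilon F)\cdot \nabla \phi \sdd x
= -\int_{\Omega'} F\cdot \nabla(g_\epsilon \phi)\sdd x + \int_{\Omega'}(F\cdot \nabla g_\epsilon)\,\phi \sdd x.
\eeqq
Since $g_\epsilon \phi \in C_0^1(\Omega')$, the first term on the right equals $\int_{\Omega'} g_\epsilon \phi \sdd(\divergence F)$ by the definition of the distributional divergence of $F$ as a Radon measure. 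This is exactly the identity $\divergence(g_\epsilon F) = g_\epsilon \divergence F + F\cdot \nabla g_\epsilon$ tested against $\phi$, valid in the smooth setting. I would then let $\epsilon \to 0$: the left-hand side converges to $-\int (gF)\cdot \nabla \phi \sdd x$ because $g_\epsilon F \to gF$ in $L^1(\supp\phi)$; the first term on the right tends to $\int g\phi \sdd(\divergence F)$ by uniform convergence $g_\epsilon \phi \to g\phi$ on $\supp\phi$ and the finiteness of $\divergence F$ as a Radon measure; and the cross term tends to $\int (F\cdot \nabla g)\phi \sdd x$ because $F\phi \in L^\infty$ with compact support while $\nabla g_\epsilon \to \nabla g$ in $L^1(\supp\phi)$. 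Exhausting $\Omega$ by such $\Omega'$ yields the identity $\divergence(gF) = g\divergence F + F\cdot \nabla g$ as Radon measures on $\Omega$. Since on any bounded subdomain the right-hand side is bounded in total variation by $\|g\|_{L^\infty}\|\divergence F\| + \|F\|_{L^\infty}\operatorname{Lip}(g)\,\meas(\Omega')$, we conclude that $gF$ is a divergence-measure field.

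The delicate point I anticipate is the convergence of the cross term $\int (F\cdot \nabla g_\epsilon)\phi \sdd x$: the target $F\cdot \nabla g$ is only defined almost everywhere via Rademacher's theorem, so one must verify that mollification indeed selects this correct a.e.\ gradient rather than some other object. The uniform $L^\infty$ bound on $\nabla g_\epsilon$ together with the a.e.\ convergence coming from the standard Lebesgue/mollifier identification is what ultimately allows the dominated convergence argument to go through for this term.
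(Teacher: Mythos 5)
The paper does not actually prove this lemma---it is quoted from \cite{CF}---so there is no internal proof to compare against; your mollification argument is the standard one for this product rule and it is correct. Each limit passage is properly justified: $g_\epsilon\phi\to g\phi$ uniformly against the finite Radon measure $\divergence F$, $\nabla g_\epsilon=(\nabla g)\ast\rho_\epsilon\to\nabla g$ in $L^1(\supp\phi)$ against the bounded integrand $F\phi$, and $g_\epsilon F\to gF$ uniformly against $\nabla\phi$; the ``delicate point'' you flag is settled precisely by the identity $\nabla(g\ast\rho_\epsilon)=(\nabla g)\ast\rho_\epsilon$, which guarantees that mollification reproduces the weak (hence, by Rademacher, the a.e.\ pointwise) gradient. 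One small point worth stating explicitly: with the paper's definition of a divergence-measure field you must verify $gF\in L_\infty(\Omega)$ and $\|\divergence(gF)\|(\Omega)<\infty$ on all of $\Omega$, not merely on compactly contained subdomains; since $g$ is only assumed locally Lipschitz, this final step uses boundedness of $\Omega$ (which holds in every application in the paper), after which your total-variation bound $\|g\|_{L_\infty}\|\divergence F\|(\Omega)+\|F\|_{L_\infty}\operatorname{Lip}(g)\,\meas(\Omega)$ closes the argument.
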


 A set $E\subset \R^d$ is called a set of \emph{finite perimeter} in $\Omega$ if its characteristic
 function $\chi_E$ is a $BV$ function in $\Omega$. We write $P(E;\Omega):= \norm{D\Chi_E}(\Omega)$, where $D\Chi_E$ is the Radon measure defined by the distributional gradient and $\norm{D\Chi_E}$ is the corresponding total variation measure, and we set $P(E) := P(E;\R^d)$. We say that $E$ is of finite perimeter if $P(E) < \infty$. 
The following coarea formula for $BV$ functions relates the $BV$-norm to perimeters of level sets.
 \begin{prop}[Coarea formula] \label{Coa}
 	Let $u\in BV(\Omega)$, and denote $E_t:=\{ x\in \Omega\colon  u(x)>t \}$ for $t\in \R$. Then
 	\beqq
 	\|Du\|(\Omega) = \int_{\R}  P(E_t;\Omega) \sdd t\,.
 	\eeqq
 \end{prop}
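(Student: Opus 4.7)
The plan is to establish both inequalities $\|Du\|(\Omega) \leq \int_\R P(E_t;\Omega)\sdd t$ and $\|Du\|(\Omega) \geq \int_\R P(E_t;\Omega)\sdd t$ separately, following the classical Fleming--Rishel strategy.

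For the upper bound, I would start from the layer-cake representation $u(x) = \int_0^\infty \Chi_{E_t}(x)\sdd t - \int_{-\infty}^0 (1 - \Chi_{E_t}(x))\sdd t$, valid pointwise a.e.\ for $u\in L_1^{\mathrm{loc}}(\Omega)$. For any test field $\varphi \in C_0^1(\Omega,\R^d)$ with $\|\varphi\|_{L_\infty(\Omega)}\leq 1$, the constant $1$ integrates to zero against $\divergence\varphi$ (by compact support of $\varphi$), so Fubini yields
\[
\int_\Omega u\,\divergence\varphi\sdd x = \int_\R \left(\int_\Omega \Chi_{E_t}\divergence\varphi\sdd x\right)\sdd t.
\]
Each inner integral is bounded by $P(E_t;\Omega)$ directly from the definition of perimeter, so taking the supremum over admissible $\varphi$ produces $\|Du\|(\Omega)\leq \int_\R P(E_t;\Omega)\sdd t$.

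For the reverse inequality, I would rely on smooth strict approximation: by the Anzellotti--Giaquinta construction (partition-of-unity mollification), there exist $u_n\in C^\infty(\Omega)\cap W^{1,1}(\Omega)$ with $u_n\to u$ in $L_1(\Omega)$ and $\int_\Omega \abs{\nabla u_n}\sdd x\to \|Du\|(\Omega)$. For each smooth $u_n$, the classical Federer coarea formula on $C^\infty$ gives $\int_\Omega \abs{\nabla u_n}\sdd x = \int_\R P(\{u_n>t\};\Omega)\sdd t$. Next, the pointwise identity $\abs{u(x)-u_n(x)} = \int_\R \abs{\Chi_{\{u>t\}}(x) - \Chi_{\{u_n>t\}}(x)}\sdd t$ integrated over $\Omega$ and then Fubini yield $\|u_n-u\|_{L_1(\Omega)} = \int_\R \|\Chi_{\{u_n>t\}} - \Chi_{E_t}\|_{L_1(\Omega)}\sdd t$, so along a suitable subsequence $\Chi_{\{u_n>t\}}\to \Chi_{E_t}$ in $L_1(\Omega)$ for a.e.\ $t\in \R$. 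Lower semicontinuity of perimeter under $L_1$ convergence of characteristic functions then gives $P(E_t;\Omega)\leq \liminf_n P(\{u_n>t\};\Omega)$ a.e., and Fatou's lemma finally produces
\[
\int_\R P(E_t;\Omega)\sdd t \leq \liminf_n \int_\R P(\{u_n>t\};\Omega)\sdd t = \lim_n \int_\Omega \abs{\nabla u_n}\sdd x = \|Du\|(\Omega).
\]

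The main technical obstacle is the strict smooth approximation $u_n \to u$ with simultaneous convergence of total variations; this requires a carefully chosen partition of unity adapted to an exhaustion of $\Omega$ by compactly contained subdomains, and is the only nontrivial ingredient in the argument. The classical coarea formula on $C^\infty$ functions, lower semicontinuity of perimeter, and the Fubini/Fatou bookkeeping are standard and would be invoked directly.
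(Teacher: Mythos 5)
The paper does not actually prove this proposition: it is the classical Fleming--Rishel coarea formula for $BV$ functions, stated as a known auxiliary result and used as a black box in the proof of Theorem \ref{bvidentifiability}. Your argument is therefore not competing with an argument in the paper; judged on its own, it is the standard textbook proof and it is correct. The upper bound via the layer-cake representation and Fubini is fine --- Fubini is justified because $\int_0^\infty \abs{E_t}\sdd t + \int_{-\infty}^0 \abs{\Omega\setminus E_t}\sdd t = \norm{u}_{L_1(\Omega)} < \infty$, and the inner bound $\int_\Omega \Chi_{E_t}\divergence\varphi\sdd x \le P(E_t;\Omega)$ holds by the definition of perimeter as a supremum whether or not the right-hand side is finite. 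The lower bound correctly isolates the two genuinely nontrivial external ingredients: strict smooth approximation in $BV$ (Anzellotti--Giaquinta, with $\int_\Omega\abs{\nabla u_n}\sdd x\to\norm{Du}(\Omega)$ and no boundary regularity of $\Omega$ required) and the smooth coarea/Sard argument identifying $\int_\Omega\abs{\nabla u_n}\sdd x$ with $\int_\R P(\{u_n>t\};\Omega)\sdd t$; given those, the $L_1$ identity for level sets, lower semicontinuity of perimeter under $L_1$ convergence, and Fatou close the argument. The only caveat is that these two invoked ingredients are of comparable depth to the statement itself, so your proof is complete only relative to them --- which you acknowledge explicitly, and which is consistent with how the paper treats the result.
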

 
 The \emph{reduced boundary} $\partial^* E$ is the set of $x \in \R^d$ such that $\norm{D\Chi_E}(B(x,r)) >0$ for all $r>0$ and
\beqq
   \nu_E(x) := -\lim_{r\downarrow 0} \frac{D\Chi_E (B(x,r))}{\norm{D\Chi_E}(B(x,r))}
\eeqq
 exists with $\abs{\nu_E(x)} = 1$.
 Since $P(E)=\mathcal{H}^{d-1}(\partial^*E)$, the set $E$ is of finite perimeter if and only if $\mathcal{H}^{d-1}(\partial^*E) <\infty$, where $\mathcal{H}^{d-1}(\partial^*E)$ denotes the $(d-1)$-dimensional Hausdorff measure of $\partial^*E$ in $\R^d$.  For such sets, the following generalized Gauss-Green theorem holds, see \cite[Theorems 5.2 and 7.2]{CTZ}.
 
 \begin{theorem}\label{G-F} Let $E$ be a set of finite perimeter. If $F$ is a continuous and bounded divergence-measure field on $E$,  then we have
 	\beqq
 	\int_E \divergence F \sdd x =\int_{\partial^*E} F(y)\cdot \nu_{E}(y)\sdd \mathcal{H}^{d-1}(y)\,.
 	\eeqq
 \end{theorem}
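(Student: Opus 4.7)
The plan is to reduce to the classical De Giorgi--Federer Gauss-Green formula for smooth vector fields via mollification, and then to carefully pass to the limit on both sides. After extending $F$ continuously and boundedly to an open neighborhood of $\bar E$ (e.g., by a Tietze-type extension, preserving $\|F\|_\infty$), let $\eta_\epsilon$ be a standard mollifier and set $F_\epsilon := F * \eta_\epsilon$. Then $F_\epsilon \in C^\infty$, $\|F_\epsilon\|_\infty \leq \|F\|_\infty$, and by continuity of $F$ one has $F_\epsilon \to F$ uniformly on compact sets. Moreover, $\divergence F_\epsilon = (\divergence F) * \eta_\epsilon$ as smooth functions. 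For smooth vector fields on a set of finite perimeter, the classical Gauss-Green formula reads
\begin{equation*}
\int_E \divergence F_\epsilon \sdd x = \int_{\partial^* E} F_\epsilon \cdot \nu_E \sdd \mathcal{H}^{d-1}.
\end{equation*}

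Passage to the limit on the right-hand side is straightforward: uniform convergence of $F_\epsilon$ to $F$ on a compact neighborhood of $\partial^* E$, together with $\mathcal{H}^{d-1}(\partial^* E) = P(E) < \infty$ and the pointwise bound $|F_\epsilon \cdot \nu_E| \leq \|F\|_\infty$, yields via dominated convergence the limit $\int_{\partial^* E} F \cdot \nu_E \sdd \mathcal{H}^{d-1}$.

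For the left-hand side, Fubini's theorem transforms the identity into
\begin{equation*}
\int_E \divergence F_\epsilon \sdd x = \int (\chi_E * \eta_\epsilon)(y) \sdd (\divergence F)(y),
\end{equation*}
where the integrand is uniformly bounded by $1$ and converges pointwise to $\chi_E(y)$ at $\mathcal{L}^d$-a.e.\ point of $\R^d$ (namely, at every point of Lebesgue density $0$ or $1$). The main obstacle is controlling the behavior on the reduced boundary $\partial^* E$, where $(\chi_E * \eta_\epsilon)(y) \to 1/2$: in order to conclude that the limit equals $\int_E \sdd(\divergence F)$ without a spurious factor, one needs the structural fact that for a bounded divergence-measure field the measure $|\divergence F|$ is absolutely continuous with respect to $\mathcal{H}^{d-1}$, and, more subtly, that the continuity of $F$ prevents $\divergence F$ from carrying a jump contribution along $\partial^* E$. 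Rigorously justifying this cancellation, together with the identification of the normal trace with the pointwise inner product $F(y)\cdot \nu_E(y)$, is the technical heart of the argument carried out in \cite[Theorems 5.2 and 7.2]{CTZ} precisely under the stated hypotheses of continuity and boundedness of $F$.
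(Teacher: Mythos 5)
This statement is not proved in the paper at all: it is imported verbatim from \cite[Theorems 5.2 and 7.2]{CTZ} as an external tool, so there is no internal proof to compare your attempt against. On its own merits, your mollification outline follows a recognized route to such Gauss--Green formulas (close in spirit to the Chen--Frid approach), and you correctly isolate the two genuinely hard structural facts: that $\|\divergence F\|\ll \mathcal{H}^{d-1}$ for bounded divergence-measure fields, and that continuity of $F$ prevents $\divergence F$ from charging the rectifiable set $\partial^* E$, so that the points where $\chi_E * \eta_\epsilon \to \tfrac12$ do not contribute.

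As written, however, the proposal has two gaps. First, the reduction step is flawed: a Tietze-type extension of $F$ to a neighborhood of $\bar E$ preserves continuity and boundedness but gives no control whatsoever on the distributional divergence of the extension, while your Fubini identity $\int_E \divergence F_\epsilon \sdd x = \int (\chi_E * \eta_\epsilon)\sdd(\divergence F)$ requires $\divergence F$ to be a finite Radon measure on an $\epsilon$-neighborhood of $E$; you would need either an extension preserving the divergence-measure property or an interior approximation of $E$ (e.g., by superlevel sets of $\chi_E * \eta_\epsilon$, which is in fact how \cite{CTZ} proceeds). Second, and more fundamentally, the limit on the left-hand side --- showing that $\int(\chi_E*\eta_\epsilon)\sdd(\divergence F)$ converges to $\divergence F(E^1)$ with no spurious contribution from $\partial^* E$, and identifying the resulting normal trace with the pointwise product $F(y)\cdot\nu_E(y)$ --- is precisely the content of the theorems you cite, and you explicitly defer it to \cite{CTZ}. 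The proposal is therefore a sound strategy wrapped around a citation of the result it is meant to establish, not a self-contained proof; given that the paper itself uses the theorem as a black box, this is acceptable only if presented as a sketch with references for the two structural facts above.
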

 
 We are now in a position to prove Theorem \ref{bvidentifiability}.
 
 \begin{proof}[Proof of Theorem \ref{bvidentifiability}] 
 Let $u_a=u_b$ and $h = a - b$. From the variational form 
we have
\beqq
\int_{D}h \nabla u_a \cdot \nabla v \sdd x =0 \qquad\text{for all $v\in H_0^1(D)$},
\eeqq
or in other words,
 	\beqq
 	\divergence(h\nabla u_a) =0 \qquad \text{a.e.\ in $D$}
 	\eeqq
with the divergence understood in the weak sense. From our assumption we conclude that $u_a\in C^{1,\alpha}(\bar{D})$ by Theorem \ref{re}. Thus $(h\nabla u_a)u_a$ is a continuous and bounded divergence-measure field. 

We extend $h$ from $D$ to a $BV$ function on $\R^d$, still denoted by $h$, with compact support.  Applying Proposition \ref{Coa} we obtain 
 \beqq
 \begin{split} 
 	\int_{0}^{1} P(E_t) \sdd t  = 	\int_{0}^{1} P(E_t,\R^d) \sdd t  
 	& \leq 	 \|Dh\|(\R^d)  <\infty\,,
 \end{split}
 \eeqq
 where $E_t=\{x\in \R^d\colon\ h(x)>t\}$. From this we infer that there exists a decreasing sequence $t_n\to 0$ as $n\to \infty$ such that 
 \be \label{k-03}
P(E_{t_n})= \mathcal{H}^{d-1}(\partial^*\{h> t_n \})  \leq \frac{1}{t_n \abs{\ln t_n}}\,.
 \ee 
 
 For $\alpha \in [0,1]$ and any Lebesgue-measurable set $E\subset \R^d$ we denote 
 \beqq
 E^\alpha :=\bigg\{ y\in \R^d: \lim\limits_{r\to 0} \frac{|E\cap B(y,r)|}{|B(y,r)|} =\alpha \bigg\}\,.
 \eeqq 
 If $E,F \subset \R^d$ are sets of finite perimeter, then up to a set of zero  $\mathcal{H}^{d-1}$-measure we have
 \be \label{k-05}
 \partial^*(E \cap F) \subseteq (\partial^* E \cap F^1) \cup (E^1 \cap \partial^* F) \cup ( \partial^* E \cap \partial^* F)\,,
 \ee  
as shown, e.g., in \cite[Theorem 16.3]{maggi}. Now we consider the set $A_{t_n}=D\cap \{ h>t_n\}$. From \eqref{k-05}  and $D$ being a $C^{1,\alpha}$ domain we have
 \be \label{k-06}
 \begin{split} 
 	\partial^* A_{t_n} & \subseteq (\partial^* D \cap \{ h> t_n\}^1) \cup (D^1 \cap \partial^* \{ h>t_n\}) \cup ( \partial^* D \cap \partial^* \{ h>t_n\})\\
 	& = (\partial D \cap \{ h> t_n\}^1) \cup (\bar{D} \cap \partial^* \{ h>t_n\}) \cup ( \partial D \cap \partial^* \{ h>t_n\})
 	\,.
 \end{split}
 \ee 
 Consequently, we obtain
 \beqq
 \begin{split} 
 	\mathcal{H}^{d-1}(\partial^* A_{t_n})& \leq 2\mathcal{H}^{d-1}
 	(\partial  D ) + \mathcal{H}^{d-1}(\bar{D}\cap \partial^*\{ h>t_n\} ) \\
 	&	\leq 2\mathcal{H}^{d-1}
 	(\partial  D ) +  \mathcal{H}^{d-1}\big(  \partial^*\{ h>t_n\} \big) \\
 	& \leq 2\mathcal{H}^{d-1}
 	(\partial D) +  \frac{1}{t_n \abs{\ln t_n } }<\infty\,.
 \end{split}
 \eeqq
 This implies that for each $n$, the set $A_{t_n}$ is of finite perimeter. 
 
Thus, Theorem \ref{G-F} can be applied to $A_{t_n}$ (where we abbreviate the corresponding measure-theoretic normal vectors by $\nu$ in what follows) to obtain 
 	\beqq
 	\int_{A_{t_n}}  \divergence(h u_a \nabla u_a)\sdd x =\int_{\partial^*A_{t_n}} h u_a (\nabla u_a\cdot \nu) \sdd \mathcal{H}^{d-1}\,,
 	\eeqq
or equivalently  
 	\beqq
 	\int_{A_{t_n}} \divergence (h \nabla u_a) u_a \sdd x + \int_{A_{t_n}} h|\nabla u_a|^2 \sdd x= \int_{\partial^*A_{t_n}} h u_a \nabla u_a\cdot \nu \sdd \mathcal{H}^{d-1}\,,
 	\eeqq
see Lemma \ref{prod}.	From $\divergence (h \nabla u_a)=0 $ a.e. on $D$ and \eqref{k-06} we get
 	\beqq
 	\int_{A_{t_n}} h |\nabla u_a|^2 \sdd x \leq \int_{\partial^* \{h>t_n\} \cap \bar{D}} | h (\nabla u_a\cdot \nu) u_a | \sdd\mathcal{H}^{d-1} + \int_{ \partial D \cap \bar A_{t_n}} | h (\nabla u_a\cdot \nu) u_a | \sdd \mathcal{H}^{d-1}\,.
 	\eeqq
Note that if $\varphi \in c + V$ for a $c \in \R$, in assumption (iii), we can assume $c=0$ without loss of generality, since this modification leaves $\nabla u_a$ unchanged.
Thus on $\partial D $ we have either $u_a=0$ or $h = 0$, which leads to 
 	\beqq
 		\int_{A_{t_n}} h |\nabla u_a|^2 \sdd x \leq \int_{\partial^* \{h>t_n\}\cap D} | h (\nabla u_a\cdot \nu) u_a |\sdd\mathcal{H}^{d-1}\,.
 	\eeqq
 	
 	Note that $h(x)=t_n$ for $x\in \partial^* \{h>t_n\}\cap D$.  Using \eqref{k-03} and the fact that $u_a\in C^{1,\alpha}(\bar{D})$ we can estimate
 	\beqq
 	\begin{split} 
 		\int_{A_{t_n}} h |\nabla u_a|^2   \sdd x
 		& \leq t_n \| u_a\|_{L_\infty(D)} \|\nabla u_a\|_{L_\infty(D)}  \mathcal{H}^{d-1}( \partial^* \{h>t_n\} \cap {D}) \\
 		& \leq C t_n \mathcal{H}^{d-1}(\partial^* \{h>t_n \})\\
 		& \leq C \frac{t_n}{t_n \abs{\ln t_n}}
 	 =\frac{C}{\abs{\ln t_n}}\,,
 	\end{split}
 	\eeqq
 	with $C>0$ independent of $n$. Let $A_{+}=\{x\in D: h(x)>0 \}$, $A_{-}=\{x\in D: h(x)<0 \}$, which by continuity of $h$ are open sets. For $n\to \infty$, by the dominated convergence theorem we conclude that 
 	\beqq
 	\int_{A_+} h |\nabla u_a|^2 \sdd x = \int_{A_-} h |\nabla u_a|^2 \sdd x =0.
 	\eeqq
 	
 	Under condition (A), we directly obtain $A_+ = A_- = \emptyset$, which completes the proof. Let condition (B) hold.
 	If $\abs{A_+}>0$, then Lemma \ref{subsetzerolemma} implies that $f$ vanishes on an open subset of $A_+$, contradicting our assumption. By continuity of $h$, this implies $A_+=\emptyset$.
 	By the same argument, we obtain $A_{-}= \emptyset$, and thus $h=0$ on $D$ also under condition (B).
 \end{proof}

\subsection{Remarks on Stability}\label{sec:main-stability} 

We now comment on the stability of the dependence of $a$ on $u_a$.
Under the conditions of Theorem \ref{bvidentifiability}, the techniques that we develop in Section \ref{sec:main-ident} can not directly be adapted to proving quantitative H\"older estimates of the type \eqref{stab-general}. 
A natural starting point would be to derive
\beqq
\begin{split} 
 \int_{A_{t_n}} h|\nabla u_a|^2 \sdd x & =  \int_{\partial^*A_{t_n}} h u_a \nabla u_a\cdot \nu \sdd \mathcal{H}^{d-1} -\int_{A_{t_n}} b(\nabla u_a-\nabla u_b)\cdot \nabla u_a\sdd x
 \\
 & \qquad  + \int_{\partial^*A_{t_n}} bu_a(\nabla u_a-\nabla u_b)\cdot \nu  \sdd \mathcal{H}^{d-1}
\end{split}
\eeqq
by the same argument as in the proof of Theorem \ref{bvidentifiability}.
However, now the main difficulty is that the behavior of $\nabla u_a-\nabla u_b$ on the set $\partial^*A_{t_n}$ is not clear. Moreover, to extract a stability estimate one also needs to control $|\nabla u_a|^2$ from below. 

With additional a prior bounds that ensure compactness, we can still deduce the following basic continuity result from Theorem \ref{bvidentifiability}.

\begin{cor} Let the assumptions of Theorem \ref{bvidentifiability} hold and for an $M>0$, let
\beqq
a \in \cA_M := \big\{b\in \mathcal{A}:  \norm{b}_{BV(D)} +  \norm{b}_{C^{0,\alpha}(\bar{D})}\leq M \big\}\,.
\eeqq
If $\{a_n\}$ is a sequence in $\cA_M$ and if $u_{a_n}\to u_a$ in $V$, then $a_n\to a$ in $C^{0,\beta}(\bar{D})$ for any $\beta \in (0,\alpha)$. 
\end{cor}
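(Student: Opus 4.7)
The plan is to reduce the claim to the identifiability Theorem \ref{bvidentifiability} via a standard precompactness-plus-subsequence argument. First I would observe that the embedding $C^{0,\alpha}(\bar D) \hookrightarrow C^{0,\beta}(\bar D)$ is compact for $\beta \in (0,\alpha)$ by Arzelà--Ascoli, so the bounded set $\cA_M$ is precompact in $C^{0,\beta}(\bar D)$. It therefore suffices to show that every subsequence of $\{a_n\}$ admits a further subsequence converging in $C^{0,\beta}(\bar D)$ to the same limit $a$; the full sequence then converges.

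Given a subsequence, I would extract by precompactness a further one, still written $\{a_{n_k}\}$, with $a_{n_k} \to \tilde a$ in $C^{0,\beta}(\bar D)$ for some limit $\tilde a$, and then argue $\tilde a = a$. To invoke Theorem \ref{bvidentifiability} with $(\tilde a, a)$ in place of $(a,b)$, I need to check that $\tilde a$ still belongs to the admissible class: the pointwise bounds $\lambda \leq \tilde a \leq \Lambda$ survive uniform convergence, the Hölder seminorm is lower semicontinuous under pointwise convergence, and the total variation is lower semicontinuous with respect to $L_1$ convergence (which uniform convergence supplies on the bounded domain $D$). This yields $\tilde a \in \cA \cap BV(D) \cap C^{0,\alpha}(\bar D)$. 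Hypothesis (iii) of Theorem \ref{bvidentifiability} is either automatic (in the case $\varphi \in c + V$) or inherited from $a_n|_{\partial D} = a|_{\partial D}$ via uniform convergence of the traces.

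Next I would use the standard energy estimate for continuous dependence of $u_a$ on $a$: subtracting the weak formulations for $a_{n_k}$ and $\tilde a$ and testing with $u_{a_{n_k}} - u_{\tilde a} \in V$ gives
\beqq
\lambda\, \bignorm{\nabla (u_{a_{n_k}} - u_{\tilde a})}_{L_2(D)} \leq \norm{a_{n_k} - \tilde a}_{L_\infty(D)}\, \norm{\nabla u_{\tilde a}}_{L_2(D)},
\eeqq
so $u_{a_{n_k}} \to u_{\tilde a}$ in $V$. Combined with the hypothesis $u_{a_{n_k}} \to u_a$ in $V$, this forces $u_{\tilde a} = u_a$, and Theorem \ref{bvidentifiability} then yields $\tilde a = a$. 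The subsequence principle closes the argument.

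The only step requiring any care is the verification that the cluster point $\tilde a$ actually lies in the class where Theorem \ref{bvidentifiability} applies; this is precisely why $\cA_M$ is defined to control both the BV and the Hölder norms, and why the two lower-semicontinuity facts are invoked. Everything else, including the Lax--Milgram estimate and the subsequence principle, is routine.
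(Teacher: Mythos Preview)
Your proof is correct and follows essentially the same route as the paper's own argument: precompactness of $\cA_M$ in $C^{0,\beta}(\bar D)$, extraction of a convergent subsequence with limit $\tilde a$, continuous dependence $u_{a_{n_k}} \to u_{\tilde a}$ in $V$, identification $u_{\tilde a} = u_a$, and then Theorem~\ref{bvidentifiability} to conclude $\tilde a = a$, closed by the subsequence principle. Your version is in fact slightly more careful than the paper's in two places: you verify via lower semicontinuity that $\tilde a$ retains full $C^{0,\alpha}$ regularity (the paper is content with $C^{0,\beta}$ and applies Theorem~\ref{bvidentifiability} with exponent $\beta$, which also works), and you explicitly track hypothesis~(iii), which the paper passes over silently.
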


\begin{proof}
The sequence $\{a_n\}$ is compact in $C^{0,\beta}(\bar{D})$. Hence, there exists a subsequence $\{a_{n_k}\}$ converging to $a^*\in \mathcal{A}\cap  C^{0,\beta}(\bar{D})$ in $C^{0,\beta}(\bar{D})$. As a consequence we obtain $u_{a_{n_k}}\to u_{a^*}$ in $V$. But this implies $u_a=u_{a^*}$. 
Since $C^{0,\beta}(\bar{D})\subset L^1(D)$, we also have convergence of $\{a_{n_k}\}$ in $L^1(D)$, and by lower semicontinuity of the $BV$-norm we have $a^* \in BV(D)$. Now from Theorem \ref{bvidentifiability}, applied to $a,a^* \in BV(D)\cap C^{0,\beta}(\bar D)$, we get $a^*=a$. Since the argument shows that any subsequence of $\{a_n\}$ has a subsequence converging to $a$, we obtain that $a_n$ converges to $a$ in $C^{0,\beta}(\bar{D})$. 
\end{proof}

For uniformly positive $f$, H\"older stability estimates of the type \eqref{stab-general} have been obtained, e.g., in \cite{Boet}.  However, under our present assumptions without sign restrictions on $f$, in general the exponents in such H\"older estimates necessarily depend on further particular properties of $f$. To illustrate these difficulties in obtaining quantitative estimates in this setting, we now turn to the case $d=1$, where we arrive at a characterization of H\"older exponents in terms of properties of $f$.

\section{Stability in the One-Dimensional Case}\label{sec:1d}

In this section we study \eqref{pde} in the case $d=1$ with $V=H_0^1(0,1)$ and $\varphi=0$, for which our first result reads as follows.
  
\begin{theorem} \label{1d-1}
	Let $f\in L_1(0,1)$ with $f\not =0$ a.e.\ and $a \in \cA$.
	If  $ \{a_n\}\subset  \mathcal{A}$ such that $u_{a_n}$ converges to $u_a$ in $H_0^1(0,1)$, then $a_n$ converges to $a$ in $L_p(0,1)$ for $1\leq p<\infty$.
	  In particular, if $a,b\in \mathcal{A}$ then $  
	   u_a=u_b$ implies $a=b$ a.e.
\end{theorem}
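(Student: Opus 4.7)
The plan is to exploit the explicit one-dimensional representation of the solution. Integrating $-(au_a')' = f$ once yields $a u_a' = c_a - F$, where $F(x) := \int_0^x f(t)\sdd t$ and the constant $c_a$ is determined by the boundary conditions $u_a(0) = u_a(1) = 0$ as $c_a = \bigl(\int_0^1 F/a\sdd x\bigr) \big/ \bigl(\int_0^1 1/a\sdd x\bigr)$. The decisive consequence of $f \neq 0$ a.e.\ that I will use repeatedly is that for every $t \in \R$ the level set $\{F = t\}$ has Lebesgue measure zero: at a density point $x$ of $\{F = t\}$ at which $F$ is differentiable, the difference quotient of $F$ vanishes along a set of $h$ of density one at $0$, so $f(x) = F'(x) = 0$ at such $x$; if the level set had positive measure, this would produce a positive-measure subset of $\{f = 0\}$, contradicting the hypothesis. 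In particular, $u_a' \neq 0$ a.e.

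For the convergence statement, I let $a_n \in \mathcal{A}$ satisfy $u_{a_n} \to u_a$ in $H_0^1(0,1)$ and set $c_n := c_{a_n}$. The bounds $\lambda \le a_n \le \Lambda$ together with $\|F\|_\infty \le \|f\|_{L_1}$ yield $|c_n| \le \|f\|_{L_1}\Lambda/\lambda$, so $\{c_n\}$ is bounded. Since it is enough to show that every subsequence contains a further subsequence converging to $a$ in $L_p$, I pass to a subsequence along which $c_n \to c^*$ and $u_{a_n}' \to u_a'$ pointwise a.e. Using $a_n u_{a_n}' = c_n - F$, on the full-measure set $\{u_a' \neq 0\}$ one has $u_{a_n}'(x) \neq 0$ eventually, and therefore
\beqq
a_n(x) = \frac{c_n - F(x)}{u_{a_n}'(x)} \longrightarrow \frac{c^* - F(x)}{u_a'(x)} =: a^*(x);
\eeqq
combined with the uniform bound $a_n \le \Lambda$, dominated convergence yields $a_n \to a^*$ in $L_p(0,1)$ for every $p \in [1,\infty)$.

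It remains to identify $a^* = a$. From $a u_a' = c_a - F$ and $a^* u_a' = c^* - F$ subtraction yields $(a - a^*)u_a' = c_a - c^*$ a.e. Suppose for contradiction that $c^* \neq c_a$; since $|a - a^*| \le 2\Lambda$, this would force $|u_a'| \ge |c_a - c^*|/(2\Lambda)$ a.e. However, $\int_0^1 (c_a - F)/a \sdd x = 0$ together with $a > 0$ implies that $c_a - F$ either changes sign or vanishes somewhere, so by the intermediate value theorem there exists $x_0 \in [0,1]$ with $F(x_0) = c_a$; the pointwise bound $|u_a'| \le |c_a - F|/\lambda$ and the continuity of $F$ then make $|u_a'|$ arbitrarily small on a neighborhood of $x_0$ of positive measure, a contradiction. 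Hence $c^* = c_a$ and $a^* = a$ a.e., and since the limit is always $a$ the full sequence converges. The identifiability assertion then follows at once by specializing to the constant sequence $a_n \equiv b$: if $u_a = u_b$, the convergence result gives $b = a$ a.e. The main subtlety lies in this last contradiction step, which hinges on the continuity of $F$ producing an open neighborhood on which $|u_a'|$ is small, in direct conflict with the a.e.\ lower bound that a hypothetical $c^* \neq c_a$ would entail.
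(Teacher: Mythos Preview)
Your proof is correct and follows essentially the same approach as the paper: both use the explicit representation $a u_a' = c_a - F$, extract a pointwise a.e.\ convergent subsequence of $u_{a_n}'$, invoke the fact that level sets of $F$ are null (your density-point argument is exactly the content of the paper's Lemma~\ref{lip-01}), and then use boundedness of the coefficients together with the existence of a zero of $F - c_a$ to force convergence of the constants. The only difference is organizational: the paper argues directly that $C_a - C_{a_{n_k}} \to 0$ and then reads off $a_{n_k} \to a$, whereas you first compactify to $c_n \to c^*$, build the candidate limit $a^*$, and then identify $c^* = c_a$ via the same contradiction; the two routes are equivalent in substance.
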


For $f\in L_1(0,1)$, we set $F(x):=\int_0^xf(t)\sdd t$, so that $F$ is absolutely continuous on $[0,1]$, and we define
\begin{equation}\label{Fminmax}
  F_{\min} :=\min_{x\in [0,1]}F(x)\qquad \text{and}\qquad F_{\max} := \max_{x\in [0,1]}F(x).
  \end{equation}
The proof of Theorem \ref{1d-1} uses the following simple lemma. 

\begin{lemma} \label{lip-01}
	Let $g$ be an absolutely continuous function on $[0,1]$ and let $\Omega=\{ x\in [0,1] : g(x)=0 \}$ with $|\Omega|>0$. Then $g'(x)=0$ a.e. on $\Omega$. 
\end{lemma}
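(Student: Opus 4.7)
The plan is to combine the almost-everywhere differentiability of absolutely continuous functions with the Lebesgue density theorem. Since $g$ is absolutely continuous on $[0,1]$, the classical Lebesgue theorem guarantees that $g$ is differentiable at almost every $x \in [0,1]$; call this full-measure set $D_g$. Independently, the Lebesgue density theorem tells us that almost every point of $\Omega$ is a point of density one for $\Omega$; call this full-measure (in $\Omega$) set $\Omega^*$. Since $|\Omega|>0$, the intersection $\Omega^* \cap D_g$ still has full measure in $\Omega$, and it suffices to prove $g'(x) = 0$ for every $x \in \Omega^* \cap D_g$.

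Fix such an $x$. Because $x$ is a density point of $\Omega$,
\[
   \lim_{r \downarrow 0} \frac{|\Omega \cap (x - r, x + r)|}{2r} = 1,
\]
which in particular forces $\Omega \setminus \{x\}$ to meet every punctured neighbourhood of $x$. Hence we may select a sequence $y_n \in \Omega$ with $y_n \neq x$ and $y_n \to x$. Since $g$ vanishes on $\Omega$, the difference quotients satisfy
\[
   \frac{g(y_n) - g(x)}{y_n - x} = \frac{0 - 0}{y_n - x} = 0
\]
for every $n$. By the assumed differentiability of $g$ at $x$, the full limit $g'(x) = \lim_{y \to x}(g(y) - g(x))/(y-x)$ exists, and taking the limit along the specific sequence $\{y_n\}$ forces $g'(x) = 0$.

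I do not foresee a genuine obstacle here: the only non-trivial inputs are the two classical almost-everywhere results (differentiability from absolute continuity, and the Lebesgue density theorem). The small subtlety is that density-one is a two-sided statement, so I must confirm that sequences $y_n \in \Omega \setminus \{x\}$ converging to $x$ actually exist; this follows immediately from the density condition above, since otherwise the density of $\Omega$ at $x$ would be at most $1/2$. With that in hand, the conclusion is just that the derivative, if it exists, must agree with the limit of the zero difference quotients along $\{y_n\}$.
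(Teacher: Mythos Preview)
Your argument is correct and follows essentially the same route as the paper's proof: both exploit that almost every point of $\Omega$ is approached by a sequence in $\Omega$, along which the difference quotients vanish, and then use a.e.\ differentiability of $g$. The only minor difference is that you invoke the Lebesgue density theorem to obtain such sequences, whereas the paper (implicitly) uses the more elementary fact that the isolated points of $\Omega$ form a countable, hence null, set.
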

\begin{proof} For every accumulation point $x\in \Omega$, there exists a sequence $\{x_n\}\subset \Omega$ such that $\frac{g(x_n)-g(x)}{x-x_n}=0.$ Since $g(x)$ is absolutely continuous, $g'(x)$ exists a.e.\ on $(0,1)$. Consequently $g'(x)=0$ a.e.\ on $\Omega$.
\end{proof}

\begin{proof}[Proof of Theorem \ref{1d-1}]
We first prove the stated continuity property. Let
\begin{equation}\label{Cadef}
  C_a := \biggl( \int_0^1 \frac{1}{a}\sdd x \biggr)^{-1} {\int_0^1 \frac{F}{a} \sdd x }, \quad a\in \mathcal{A} . 
\end{equation}
It is not difficult to see that $F_{\min}<C_a<F_{\max}$. By continuity, this implies that $F(x)-C_a$  has a zero in $(0,1)$. Since $u'_{a_n}$ converges to $u_a'$ in $L_2(0,1)$ we deduce that there exists a subsequence $u'_{a_{n_k}}$ of $u'_{a_n}$ converging pointwise almost everywhere to $u'_{a}$.   From the variational form,  we have $(au_a')'=-f$ in the weak sense. Since $f\in L_1(0,1)$ we infer that $au'_a$ is absolutely continuous and
	\be  \label{re-001}
	au'_a =  -   F+C_a \,,
\ee
and as a consequence we have
	\beqq
	au_a'(x) - a_{n_k}u'_{a_{n_k}}(x)=C_{n_k}
	\eeqq 
where $C_{n_k}=C_a-C_{a_{n_k}}$. Observe that the sequence $C_{n_k}$ converges to $0$, for suppose this is not the case, we can take $x\in (0,1)$ such that $u'_a(x)$ is sufficiently close to zero and $u'_{a_{n_k}}(x)$ converges to $u_a'(x)$; this is possible by \eqref{re-001}, since $F(x)-C_a$ has a zero.  From the uniform boundedness of coefficients in $\mathcal{A}$ we then deduce a contradiction, and thus $C_{n_k}\to 0$. 

Suppose that $u_a' = 0$ on a set $\Omega_0$ with $\abs{\Omega_0}>0$. 
This implies that 
	$au_a'=0$ a.e. on $\Omega_0$. Since $au'_a$ is an absolutely continuous function, Lemma \ref{lip-01} yields $(au_a')'=0$ a.e.\ on $\Omega_0$. Moreover, for absolutely continuous functions, the weak derivative equals a.e.\ the pointwise derivative. This implies $f=0$ a.e.\ on $\Omega_0$, contradicting our assumption.

Thus $\Omega := (0,1) \setminus  \Omega_0$ has full measure. Let $x\in \Omega$ and $u'_{a_{n_k}}(x)\to u'_a(x)$, then
	\beqq
	[a(x)-a_{n_k}(x)] = \frac{C_{n_k}- a_{n_k}[u'_a(x)-u'_{a_{n_k}}(x)]}{u_a'(x)} \to 0\qquad \text{as}\qquad k\to \infty\,.
	\eeqq
This implies that $a_{n_k}$ converges to $a$ almost everywhere. Hence, from the dominated convergence theorem we conclude that $a_{n_k}$ converges to $a$ in $L_p(0,1)$ for $1\leq p<\infty$. 

The above argument can also be applied to any subsequence of $\{a_n\}$, showing that any subsequence of $\{a_n\}$ has a subsequence converging to $a$ in $L_p(0,1)$, which implies that $a_n \to a$ in $L_p(0,1)$. 
To deduce identifiability, for given $a,b\in\cA$ with $u_a = u_b$ we now choose $a_n := b$ for all $n$. Then we obtain in particular $a_n \to a$ a.e.\ and thus $a = b$ a.e.
\end{proof}

Without the assumption of uniform positivity of $f$, even when Theorem \ref{bvidentifiability} or \ref{1d-1} apply, further H\"older stability results for $a$ in terms of $u_a$ depend more strongly on the particular form of $f$.  To illustrate this further, we consider the following example: Let $u_0\in C_0^\infty(\frac12,1)$ such that $u_0|_{(\frac12,1)} > 0$, $u_0'(\frac34) =0$, $u_0'(x) \neq 0$ for $x \in (\frac12,1)\setminus \{\frac34\}$, and $u_0''\neq 0$ a.e.\ in $(\frac12, 1)$. Here we write $A\lesssim B$ to denote $A\leq C B$ with $C>0$ independent of the quantities in $A$, $B$, and $A\sim B$ to denote $A\lesssim B \wedge B\lesssim A$. For $\alpha >0$, we define 
\[
u(x) = \sum_{k = 0}^\infty 2^{-\alpha k} u_0(2^k x), \; x \in [0,1], \quad\text{and}\quad u(x) = u(-x),\; x \in [-1,0).
\]
Then clearly, with $V = H^1_0(-1,1)$, for any $\alpha > \frac12$ we have 
\[
\norm{u}_{V} \sim \Bigl(\sum_{k=0}^\infty 2^{-k} (2^k 2^{-\alpha k})^2  \Bigr)^{\frac12} < \infty,
\]
and $f := - u''$ satisfies $f\neq 0$ a.e.\ and $\norm{f}_{V'} = \norm{u}_V$. 
Let $\beta \in \R$ and
\[
h_j :=  2^{\beta j} \Chi_{S_j}, \qquad S_j:=(-2^{-j} , 2^{-j}).
\]
Then on the one hand,
\[
\int_{-1}^1 (1 + h_j ) u' v'\sdd x = \int_{-1}^1 fv \sdd x + f_j(v), \quad f_j(v):=  \int_{-1}^1 h_j u' v' \sdd x , \qquad v \in V;
\]
on the other hand, by the Lax-Milgram lemma we have a unique $u_j\in V$ solving
\[
\int_{-1}^1 (1 + h_j ) u_j' v'\sdd x = \int_{-1}^1 fv  \sdd x ,\quad v \in V,
\]
and satisfying the estimate $\norm{u - u_j}_V \lesssim \norm{f_j}_{V'} = \norm{w}_V$,
with $w\in V$ defined as the solution of the variational problem
\[
\int_{-1}^1 w' v' \sdd x =  2^{\beta j}  \int_{S_j}  u' v' \sdd x, \quad v\in V.
\]
Since the unique solution is given by $w = 2^{\beta j} \Chi_{S_j} u \in V$, we obtain
\begin{equation}\label{ujdiff}
\norm{u - u_j}_V  \lesssim 2^{\beta j} \norm{\Chi_{S_j} u }_V \lesssim 2^{\beta j} \Bigl(\sum_{k>j}^\infty 2^{-k} (2^k 2^{- \alpha k})^2  \Bigr)^{\frac12} \sim 2^{ (\frac12 + \beta - \alpha )j}.
\end{equation}
For the corresponding coefficients $a:=1$ and $a_j := 1+h_j$, we have 
\begin{equation}\label{ajdiff}
\norm{a - a_j}_{L_p(-1,1)} \sim 2^{(\beta - \frac1p) j}
\end{equation}
for all $j$. Moreover, if $\beta \leq 0$, so that the functions $h_j$ remain uniformly bounded, we also obtain $\norm{u - u_j }_V \gtrsim \norm{w}_V$ and hence $\norm{u - u_j}_V \sim 2^{ (\frac12 + \beta - \alpha )j}$.

\begin{remark}
Combining \eqref{ujdiff} and \eqref{ajdiff}, we arrive at the following conclusions.
\begin{enumerate}[{\rm(i)}]
	\item The sequence $a_j$ satisfies $\norm{a_j}_{L_\infty(-1,1)} \leq 2$, and thus  $a_j \in \cA$ with $\lambda=1$, $\Lambda=2$, precisely when $\beta \leq 0$. In the case $\beta = 0$, for any $\alpha >\frac12$, we observe $\norm{u_j - u}_V \to 0$ and $\norm{a_j - a}_{L_p(-1,1)} \to 0$ for any $p < \infty$, but $\norm{u_j - u}_V \to 0$ and $\norm{a_j - a}_{L_\infty(-1,1)} =1$. This shows in particular that the continuity statement in Theorem \ref{1d-1} does not extend to $p=\infty$.
	\item For $\beta > 0$, whenever $\alpha > \frac12 + \frac1p$, there are cases with $\norm{u_j - u}_V \to 0$ and $\norm{a_j - a}_{L_p(-1,1)} \nrightarrow 0$ also for $p < \infty$. The corresponding $a_j$, however, do not remain in a set $\cA$ with uniform upper bound in $L_\infty(-1,1)$. The continuity statement in Theorem \ref{1d-1} thus depends crucially on the restriction to $a_j \in \cA$ with some $\Lambda < \infty$.
	\item For any given $\beta\leq 0$ and for $\alpha > \frac12 + \beta$, we have 
\[
    \norm{a_j - a}_{L_p(-1,1)} \lesssim \norm{u_j - u}_V^{\gamma}, \quad \gamma = \frac{p^{-1} - \beta}{\alpha - \frac12 - \beta}.
\]
Thus for large $\alpha$ (corresponding to rapid decay of $\abs{u(x)}$ and $\abs{f(x)}$ as $\abs{x}\to 0$) one obtains arbitrarily small H\"older exponents $\gamma$.
\end{enumerate}
\end{remark}

The following theorem generalizes \cite[Thm.~6.3]{Boet} to $f$ that do not have uniform sign. There, the authors only consider the case $f\geq c_f>0$ (in the particular instance $f\equiv 1$). We recall the notation \eqref{Fminmax}, and for $\rho>0$ and $M\in (F_{\min},F_{\max})$ we set
\beqq
K_\rho(M) :=\{x\in  (0,1): |F(x) - M| \leq \rho \}\,.
\eeqq
 
\begin{theorem}
\label{ratethm1d}
Let $f\in L_1(0,1)$ and $a,b\in \mathcal{A}$. Assume  there exist $\alpha\geq 0$ and $\beta>0$ such that
	\be\label{al-be}
	C_1 \rho^{\alpha} \leq \inf_{F_{\min}<M<F_{\max}}|K_\rho(M)| \leq \sup_{F_{\min}<M<F_{\max}}|K_\rho(M)| \leq C_2 \rho^\beta\,,
	\ee 
with positive constants $C_1,C_2$ independent of $\rho$.  
Then for $1\leq p \leq 2$ we  have
	\begin{equation}\label{thm1d-1}
	  \|a-b\|_{L_p(0,1)} \leq    C  \|u_a'-u_b'\|^{\max\big\{\frac{2\beta}{(2+\alpha)(2 +\beta)},\frac{p\beta}{(p+\alpha)(p +\beta)}\big\} }_{L_2(0,1)}\,,
	\end{equation}
and for $2<p<\infty$, 
	\begin{equation}\label{thm1d-2}
\|a-b\|_{L_p(0,1)} \leq    C  \|u_a'-u_b'\|^{\frac{4\beta}{(2+\alpha)(2 +\beta)p} }_{L_2(0,1)}\,,
\end{equation}
where in each case, $C$ is a positive constant depending on $\lambda,\ \Lambda $, $p$, and $f$.
\end{theorem}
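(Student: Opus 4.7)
The plan is to build on the explicit ODE identity \eqref{re-001}. Subtracting the relations $a u_a' = -F + C_a$ and $b u_b' = -F + C_b$ gives the pointwise a.e.\ identity $a u_a' - b u_b' = C_a - C_b$, which I rewrite in the two equivalent forms
\[
(a - b)\, u_a' = (C_a - C_b) - b(u_a' - u_b'), \qquad u_a' - u_b' = \frac{(F - C_a)(a - b)}{a b} + \frac{C_a - C_b}{b}.
\]
The first identity will pointwise control $|a-b|$ wherever $|u_a'|$ is non-degenerate, while the second will control $|C_a - C_b|$ by localizing to the set where $u_a'$ nearly vanishes.

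To bound $|C_a - C_b|$ I localize to $K_\rho(C_a)$, where $|F - C_a| \leq \rho$; the second identity then forces $|u_a' - u_b'| \geq |C_a - C_b|/\Lambda - C\rho$ on this set, with $C$ depending only on $\lambda,\Lambda$. A dichotomy between $|C_a - C_b| \leq 2C\Lambda\rho$ and its complement, combined with integrating $|u_a' - u_b'|^q$ over $K_\rho(C_a)$ and using the lower bound $|K_\rho(C_a)| \geq C_1 \rho^\alpha$ from \eqref{al-be}, yields
\[
|C_a - C_b| \leq C \bigl( \rho + \|u_a' - u_b'\|_{L_q(0,1)}\, \rho^{-\alpha/q} \bigr)
\]
for any $q \in [1,2]$. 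Optimizing in $\rho$ gives $|C_a - C_b| \leq C \|u_a' - u_b'\|_{L_q(0,1)}^{q/(q+\alpha)}$.

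For the main estimate I split $(0,1) = K_\rho(C_a) \cup K_\rho(C_a)^c$. On $K_\rho(C_a)$, the trivial bound $|a - b| \leq 2\Lambda$ together with the upper bound $|K_\rho(C_a)| \leq C_2 \rho^\beta$ from \eqref{al-be} gives $\int_{K_\rho(C_a)} |a-b|^p\,dx \leq C \rho^\beta$. On the complement, $|u_a'| = |F - C_a|/a \geq \rho/\Lambda$, and the first identity yields the pointwise bound $|a - b| \leq \Lambda \bigl(|C_a - C_b| + \Lambda |u_a' - u_b'|\bigr)/\rho$, so that
\[
\int_{K_\rho(C_a)^c} |a-b|^p\,dx \leq C\, \rho^{-p} \bigl( |C_a - C_b|^p + \|u_a' - u_b'\|_{L_p(0,1)}^p \bigr).
\]
Summing the two contributions and minimizing in $\rho$ will give the stability estimate, with the final exponent determined by the bound chosen for $|C_a - C_b|$.

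The two terms inside the maximum in \eqref{thm1d-1} arise from two natural specializations of the above. Running the procedure with $p = 2$ (equivalently $q = 2$) yields $\|a - b\|_{L_2(0,1)} \leq C \|u_a' - u_b'\|_{L_2}^{2\beta/[(2+\alpha)(2+\beta)]}$, and the Jensen embedding $\|a - b\|_{L_p(0,1)} \leq \|a - b\|_{L_2(0,1)}$ for $p \leq 2$ then produces the exponent $\frac{2\beta}{(2+\alpha)(2+\beta)}$. Alternatively, taking $q = p$ and running the full argument entirely in $L_p$, and at the end applying $\|u_a' - u_b'\|_{L_p(0,1)} \leq \|u_a' - u_b'\|_{L_2(0,1)}$ which is valid for $p \leq 2$, produces the exponent $\frac{p\beta}{(p+\alpha)(p+\beta)}$. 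Keeping the larger of the two establishes \eqref{thm1d-1}. For $p > 2$, \eqref{thm1d-2} then follows from the Hölder interpolation $\|a - b\|_{L_p} \leq \|a - b\|_{L_2}^{2/p}\|a - b\|_{L_\infty}^{1 - 2/p}$ together with $\|a - b\|_{L_\infty(0,1)} \leq 2\Lambda$. The main point requiring care will be the dichotomy used to bound $|C_a - C_b|$: absorbing the $O(\rho)$ correction in the lower bound for $|u_a' - u_b'|$ on $K_\rho(C_a)$ forces the case split, and the way in which $\rho$ is then balanced against $\|u_a' - u_b'\|_{L_q}$ dictates the scaling of all the resulting exponents.
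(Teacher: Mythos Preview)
Your proposal is correct and follows essentially the same route as the paper's proof: both bound $|C_a-C_b|$ via the measure lower bound on $K_\rho(C_a)$ (the paper by the direct choice $\rho=c\eta$ with $c=\lambda/(2(\lambda+\Lambda))$, you via a dichotomy and optimization in $\rho$), then split $(0,1)$ into $K_\rho(C_a)$ and its complement, balance $\rho$, and deduce \eqref{thm1d-1} and \eqref{thm1d-2} by exactly the same embeddings and interpolation. The only cosmetic difference is that the paper writes the pointwise identity in terms of $A=1/a$, $B=1/b$ rather than $a,b$.
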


Observe that in the case $1 \leq p \leq 2$, we have $\frac{p\beta}{(p+\alpha)(p +\beta)} \geq \frac{2\beta}{(2+\alpha)(2 +\beta)} $ when $2p\geq \alpha\beta$.

\begin{proof} 
We first prove that for $1\leq p\leq 2$,
	\begin{equation}\label{thm1d-intermediate}
	\|a-b\|_{L_p(0,1)} \leq    C  \|u_a'-u_b'\|^{\frac{p\beta}{(p+\alpha)(p +\beta) }}_{L_p(0,1)}\,,
	\end{equation}
with a $C>0$ having the same dependencies as in \eqref{thm1d-1}, \eqref{thm1d-2}.

Let $\eta=C_a-C_b$, with $C_a$, $C_b$ defined as in \eqref{Cadef}. Without loss of generality we may assume that $\eta\geq 0$. We will show that
	\be\label{d1-001}
\eta \leq c_0\| u'_a-u'_b\|_{L_p(0,1)}^{\frac{p}{p+\alpha}}\,.
	\ee
If $\eta=0$ then the estimate holds. We now consider the case  $\eta>0$. 	We put   $c=\frac{\lambda}{2(\lambda+\Lambda)}<\frac{1}{2}$.  Recall that $C_a\in (F_{\min},F_{\max})$. 
	For $x\in K_{c\eta}(C_a)$, with $A:=\frac{1}{a}$, $B:=\frac{1}{b}$, we have
	\beqq
	\begin{split}
		|u'_a(x)-u'_b(x)| & = |(F(x)- C_b)B(x)-(F(x)-C_a)A(x)| \\
		& \geq (\eta-c\eta)B(x) -c\eta A(x)   \geq \frac{\eta}{2\Lambda}\,,
	\end{split}
	\eeqq
	which yields
$\eta^p|K_{c\eta}(C_a)| \leq (2\Lambda)^p\|u_a'-u_b'\|_{L_p(0,1)}^p$,
	and hence we obtain from our assumption  
	\beqq
	C_1 c^{\alpha} \eta^{p+\alpha}\leq (2\Lambda)^p   \|u_a'-u_b'\|_{L_p(0,1)}^p. 
	\eeqq
	This proves \eqref{d1-001}. We write
	\beqq
	\begin{split}
		(A(x)-B(x))(F(x)-C_a) & =A(x)(F(x)-C_a)-B(x)(F(x)-C_b)+B(x)(C_a-C_b) \\
		&= -(u'_a(x)-u'_b(x)) + B(x)(C_a-C_b) \,.
	\end{split}
	\eeqq
For $x\in {K}_{\rho}(C_a)^c:=(0,1)\backslash K_\rho(C_a)$ and any $\rho>0$ we have 
	\beqq
	\begin{split}
		\rho |(A(x)-B(x)| & \leq |(A(x)-B(x))(F(x)-C_a)|  \leq  |u_a'(x)-u_b'(x)|
		+  \lambda^{-1}\eta
	\end{split}
	\eeqq
	and therefore
	\beqq
	\|A-B\|_{L_p({K}_{\rho}(C_a)^c)}^p  \leq \frac{2^{p-1}}{\rho^p} \|u_a'-u_b'\|^p_{L_p(0,1)} +  \frac{2^{p-1}\lambda^{-p}}{\rho^p}\eta^p \,.
	\eeqq
	For $x \in K_\rho(C_a)$ we have $|A(x)-B(x)|\leq 2\lambda^{-1}$ which leads to
	\beqq
	\|A-B\|_{L_p(K_{\rho}(C_a))}^p  \leq 2^p\lambda^{-p} |K_\rho(C_a)| \leq 2^pC_2\lambda^{-p}\rho^{\beta}\,.
	\eeqq
	Consequently we find
	\beqq
	\begin{split} 
		\|A-B\|_{L_p(0,1)}^p & \leq  \frac{2^{p-1}}{\rho^p} \|u_a'-u_b'\|^p_{L_p(0,1)} +  \frac{2^{p-1}\lambda^{-p}}{\rho^p}\eta^p +2^pC_2\lambda^{-p}\rho^\beta\,. 
	\end{split}
	\eeqq
	Inserting \eqref{d1-001} into this estimate we obtain
	\beqq
	\begin{split} 
		\|A-B\|_{L_p(0,1)}^p  
		&\leq \frac{2^{p-1}}{\rho^p} \|u_a'-u_b'\|^p_{L_p(0,1)} +  \frac{2^{p-1}c_0^p }{\rho^p\lambda^p} \|u_a'-u_b'\|^{\frac{p^2}{p+\alpha}}_{L_p(0,1)} +2^pC_2\lambda^{-p}\rho^\beta 
		\\
		& \leq \frac{C}{\rho^p}\|u_a'-u_b'\|^{\frac{p^2}{p+\alpha}}_{L_p(0,1)} +2^pC_2\lambda^{-p}\rho^\beta\,.  
	\end{split}
	\eeqq
	Now we see that if $u_a=u_b$ then $a=b$ a.e., since we can choose $\rho>0$ arbitrarily small. If $\|u_a'-u_b'\|_{L_p(0,1)}>0$ we put 
	\[
	  \rho= \|u_a'-u_b'\|_{L_p(0,1)}^{p^2/(p+\alpha)(p+\beta)}
	  \]
	  to obtain \eqref{thm1d-intermediate}.
	By the embedding $L_2(0,1) \subset L_p(0,1)$ for $1\leq p \leq 2$, we obtain \eqref{thm1d-1}. 
The estimate \eqref{thm1d-2} follows by interpolation between the case $p=2$ and the bound $\norm{a-b}_{L_\infty(0,1)} \leq 2 \Lambda$.
\end{proof}

\begin{remark}
Let us comment on the condition \eqref{al-be} on the primitive of $f$, which determines the H\"older exponents in the stability estimate \eqref{thm1d-1} for $a$, $b$ . Let $\{ D_j \colon j=1,\ldots, n\}$ be a partition of $D$ and $f=\sum_{j=1}^n a_j \Chi_{D_j}$ with $a_j \neq 0$. It is obvious in this case that $\alpha=\beta=1$. If $f$ vanishes in some open interval then we have $\beta = 0$. Note that one necessarily has $\alpha\geq \beta$; the condition \eqref{al-be} on $\alpha$ essentially puts a limit on the growth of $F$, whereas $\beta$ quantifies regions where $F$ is flat, and both lead to a restriction on the variability of $f$.
	\end{remark}

\section{Conclusions and Open Problems}

We have shown that identifiability of diffusion coefficients $a$ from solutions $u_a$ is ensured under weaker sufficient conditions on source terms $f$ than previously considered in the literature, in particular without a uniform positivity requirement on $f$. We have also shown alternative condition on $\nabla u_a$, which has appeared before in very similar form, to yield the same result under substantially weaker regularity requirements. As demonstrated in our additional study of the one-dimensional case, for the wider class of $f$ considered here, H\"older-type stability properties of the mapping $u_a\mapsto a$ in general depend on particular features of $f$.

One question that remains open is under what conditions uniformly positive $a\in L_\infty(D)$ are identifiable without further regularity requirements when $d\geq 2$. As Remark \ref{re-k01} shows, however, this can only hold under stronger conditions on $f$ than we are using in our main result. In the one-dimensional case, by arguments that are specialized to this situation we indeed see that under such slightly stronger conditions on $f$, identifiability holds under minimal assumptions on $a$.

Another question concerns the further characterization of stability in the higher-dimensional case. As our results for $d=1$ show, one may have arbitrarily small H\"older exponents for the class of $f$ that we consider here. Describing subsets of $f$ that lead to more favorable exponents appears to require new techniques, since neither existing methods for uniformly positive $f$ nor our approach for obtaining identifiability can directly be adapted to address this question.

\bibliographystyle{amsplain}
\bibliography{BNidentifiability}

\end{document}